\numberwithin{equation}{section}
\newtheorem{thm}[equation]                {Theorem}
\newtheorem{lem}[equation]      {Lemma}
\newtheorem{prop}[equation]     {Proposition}
\newtheorem{cor}[equation]          {Corollary}
\theoremstyle{definition}
\newtheorem{definition}[equation]   {Definition}
\theoremstyle{remark}
\newtheorem{remark}[equation]       {Remark}
\begin{document}

\begin{titlepage}
\author{Christopher Winfield}
\email{winfield\@@madscitech.org}
\thanks{The author thanks Professor D. M\"uller for his discussions
and the Mathematics Department of CAU-Kiel for their kind hospitality during the early stages of this research.}
\pagestyle{myheadings} \markboth{Draft: C. Winfield}{Report}
\end{titlepage}
\begin{abstract}
\title{
Local Solvability on $\BH_1$: Non-homogeneous Operators
}
\begin{sloppypar}
Local solvability and non-solvability are classified for left-invariant differential operators
on the Heisenberg group $\Bbb{H}_1$ of the form $L=P_n(X,Y)+Q(X,Y)$ where the
$P_n$ are certain homogeneous polynomials of order $n\geq 2$ and $Q$ is of lower order with $X=
\partial_x,$ $Y=\partial_y+x\partial_w$ on $\BR^3$.
We extend previous studies of operators of the form $P_n(X,Y)$
via representations involving
ordinary differential operators with a parameter.
\end{sloppypar}
\end{abstract}
\subjclass{primary 45E10, secondary 47A40, 81U05}
\maketitle
\pagenumbering{arabic}
\section{Introduction}
We continue our study of the solvability of operators of the form $P(X,Y)$ for
certain left-invariant vector fields on the Heisenberg group with underlying space
$\BR^3.$  We choose the realization of the corresponding Lie algebra using
$X\df$ $\partial_x,$ $Y\df$ $\partial_y+x\partial_w$ for (group) variables $x,y,w.$
Note that our operators include all generators of $\fh_1^{\BC}$ since
$\partial_w$ $=[X,Y].$ Local solvability for such a class of operators is well researched
and we defer to \cite{m1,m2,w1} and the references therein for an introduction to the present research.
Our work closely follows those techniques used in \cite{c,w1,w2,w3} to study related operators.

The study operators of order $n\geq 2$ which can be expressed as polynomials (in operator notation)
of the form
\begin{equation}L=P(X,Y)=\sum_{l=0}^nP_l(X,Y)\label{op} \end{equation}
where each $P_l$ is homogeneous of degree $l$ in the non-commuting variables $X,Y$ with complex (constant)
coefficients. Moreover, the highest-order terms $P_n$ form a so-called generic
operator by which we mean the following: In the complex variable $z,$
$P_n(iz,0)=z^n$ and $P_n(iz,1)$ has distinct complex (characteristic)
roots $\{\gamma_j\}_{j=1}^n$.
In this article we will characterize local solvability of operators $L$ in terms of
related ordinary differential operators of the form
$\cL_{\gm}^{\pm}$ $=$ $\sum_{l=0}^n\gm^{n-l}P_l(i\partial_t,\pm t)$ and $\cL_{\infty}^{\pm}$
$=$ $P_n(i\partial_t,\pm t)$ along their respective adjoint operators.

The major object of the present work is to examine the effect, if any,
that the inclusion of lower-order terms
has to the solvability of a homogenous left-invariant operator.
Some of the more famous results in local (non-)
solvability are characterized in terms of the principle symbol
defined on $T^*(\BR^m)$ given by $p_n(\vec{x},\vec{\xi})$ $\df$
$\sum_{|\ga|=n}a_{\ga}(\vec{x})(i\vec{\xi})^{\ga}$ for an operator
$L=$ $\sum_{|\ga|=n}a_{\ga}(\vec{x})\partial_{\vec{x}}^{\ga}$.
Necessary and sufficient conditions for operators of principle
type appear in the works \cite{nt1,nt2}.
More general criteria appear in \cite{ho1} (Theorem 6.1.1):
For $L$ to be locally solvable, $p_n(\vec{x},\vec{\xi})$ must satisfy
$p_n(\vec{x},\vec{\xi})$ $=0$ $\implies$
$\sum_{i=1}^m\partial_{{x_i}}p_n(\vec{x},\vec{\xi})\partial_{\xi_i}
\bar{p_n}(\vec{x},\vec{\xi})$ $-$ $\partial_{{\xi_i}}p_n(\vec{x},\vec{\xi})
\partial_{x_i}\bar{p_n}(\vec{x},\vec{\xi})$ $=0$.
In these results the highest-order derivative terms determine (non-)solvability; and, the inclusion of any smooth lower-order terms
do not alter this property.  The latter result has been applied
to various left-invariant operators on $\BH_m$ of various dimensions $m$ \cite{mpr}.
Our operators, however, are at least doubly characteristic (see \cite{w1}).
In contrast, our approach is, for the most part, to study solvability of operators $P(X,Y)$ as compared to
the solvability of the operator $P_n(X,Y)$, formed by highest-order terms of $P(X,Y)$ in the subalgebra of $\fh_1^{\BC}$ generated by
$X$ and $Y$ -
not necessarily those of highest order in differentiation: Note, for instance, that $\partial_w$ is of order two in the subalgebra, but has a symbol of order one.

We elaborate on our motivation for using the particular representations $\cL_{\gm}^{\pm}$ of $L$:
For $f(x,y,w)$ $\in$ $\cS(\BR^3)$ and $\check{\phantom{*}}$ ($\hat{{\phantom{*}}}$)   denoting
Fourier (inverse) transform with respect to the second and third variables,
we write $Lf(x,y,w)$ $=L(\hat{f}\,\check{)}(x,y,w)$ $=$
\begin{equation}\label{transform}
\frac{1}{2\pi}\int_{\BR^2}
e^{-i(\xi y+\eta w)}P(\partial_x,-i(\xi + \eta x))\hat{f}(x,\xi,\gm)d\xi d\eta\end{equation}
The change of variables $t= x\gm \pm \xi/\gm$ and $\gm=\sqrt{|\eta|}$
leads us to
the representations $\gL_{\gm}^{\pm}:$ $\gm>0$ given by $X\rightarrow \gm \dert,$ $Y\rightarrow \mp i{\gm}t$ (resp.)
whereby we obtain
\begin{align}\gL^{\pm}_{\gm}(L)&=\sum_{l=0}^n\gL_{\gm}(P_l(X,Y))
=(-i\gm)^n\sum_{l=0}^n\left(\frac{i}{\gm}\right)^{n-l}P_l\left(i\dert,t\right)\label{rep}\\
&=(-i\gm)^n\cL_{\gm}^{\pm}\,\,\,\,{\text \rm (resp.)}
\notag\end{align}
The realizations $\cL_{\gm}^{\pm}$ lend themselves to analysis as ordinary differential equations involving
a parameter (with singularity at $\gm=0$).
In turn, our positive results on solvability occur in the cases where solutions to $\cL_{\gm}^{\pm}f=g$
can
be used to construct parametrices for $L$. We introduce
\begin{definition} \label{def1} The operator
$\cL_{\gm}$
%$=\gm^{-n}P(-i\partial_t,-t)$
has as
regular parametrix on $\gO$ if for every bounded function $g$ $\in$ $\cC^{\infty}(\BR)$ there is
a function $F(t,\gm)$ satisfying the following:
\begin{itemize}
\item[1)] $F$ is a smooth function in the variables $(t,\gm)$ in domain $\gO;$
 \item[2)] $\cL_{\gm}F =g$ on $\gO;$ and,
 \item[3)] For every $m$ $\exists$ $a,C>0$ so that
 $|\partial_t^jF(t,\gm)|$ $\leq$ $C(1+|t|+|\gm|)^a$ on $\gO$
 for each $j:0\leq j\leq m.$
 \end{itemize}
\end{definition}
%We note that in the present work we cannot simply avoid singularities in our parametrices simply by
%judicious choice of $g$ as done in the aforementioned works.  Here, we will allow $t$ and $\gm$
%to be complex values in order to bypass singularities of our parametrices for $\gm$ in certain %neighborhoods
%of $0.$ So, we state
%\begin{definition}\label{def2}
% $\cL_{\gm}$ has a regular parametrix for $\gO$ $\subset$ $\BC^2$ if
%Definition \ref{def1} holds in the sense extended to complex derivatives and
%where $F$ and $g$ in analytic in their respective domains for $(t,\gm)\in$ $\gO.$
%\end{definition}

A general result which we are ready to present is
the following
\begin{lem}\label{lem1}
An operator $L$ as in (\ref{op}) is locally solvable if
each of the associated operators $\cL_{\gm}^{\pm}$ have regular
parametrices on $\BR\times (\gm_0,\infty)$ for some $\gm_0$ $>0.$
\end{lem}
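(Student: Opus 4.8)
The plan is to construct, for every $g\in C_0^\infty(\BR^3)$, a tempered distribution $u$ with $Lu=g$ on all of $\BR^3$; local solvability is then immediate. The construction is carried out on the Fourier side. Denoting by $\widehat{\phantom{f}}$ the partial Fourier transform in the last two variables, one has $\widehat{Lu}=P(\partial_x,-i(\xi+\eta x))\,\widehat{u}$ as in (\ref{transform}), so it suffices to produce, for a.e.\ $(\xi,\eta)$, a solution $\widehat{u}(\cdot,\xi,\eta)$ of the ordinary differential equation $P(\partial_x,-i(\xi+\eta x))\widehat{u}=\widehat{g}$ in $x$, depending measurably on $(\xi,\eta)$ and polynomially bounded in $(x,\xi,\eta)$ together with sufficiently many of its $\partial_x$-derivatives, so that $\widehat{u}$ is a tempered distribution and $L$ may legitimately be applied after transforming back. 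I would split the parameter space into the region $\{|\eta|>\gm_0^2\}$, where $\gm=\sqrt{|\eta|}>\gm_0$ and the hypothesis of the lemma applies, and the complementary strip $\{|\eta|\le\gm_0^2\}$, where the change of variables degenerates and a separate, elementary argument is needed.

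On $\{|\eta|>\gm_0^2\}$ I would perform, for each fixed $(\xi,\eta)$, the substitution $t=\gm x\pm\xi/\gm$ of the Introduction; by (\ref{rep}) the operator $P(\partial_x,-i(\xi+\eta x))$ then becomes $(-i\gm)^n\cL^{\pm}_{\gm}$ acting in $t$, while $\widehat{g}(\cdot,\xi,\eta)$ becomes a compactly supported---hence bounded---smooth function of $t$, carrying $(\xi,\eta)$ as parameters. Feeding this function to the regular parametrix of $\cL^{\pm}_{\gm}$ (and using that in our constructions the parametrix may be taken to depend linearly and smoothly on its datum and on $\gm$) produces $F=F(t,\gm;\xi,\eta)$; undoing the substitution and dividing by $(-i\gm)^n$ yields the desired $\widehat{u}$ on this region, and we set $\widehat{u}\equiv0$ on the strip for the time being. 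The essential point is that clause~(3) of Definition~\ref{def1} converts the estimate $|\partial_t^jF|\le C(1+|t|+|\gm|)^a$ into a polynomial bound, in $(x,\xi,\eta)$ jointly, for $\widehat{u}$ and each $\partial_x^j\widehat{u}$, since $t=\gm x\pm\xi/\gm$, $\partial_x=\gm\partial_t$, and $|t|\le|\gm x|+|\xi|/\gm$ there. Hence $\widehat{u}$ is a smooth---by clause~(1), across the open set $\{|\eta|>\gm_0^2\}$---and polynomially bounded function, its back-transform is a tempered distribution, and one may differentiate under the integral to obtain $Lu=g_{\mathrm{high}}$, the spectral part of $g$ with $|\eta|>\gm_0^2$. (A smooth cutoff in $\eta$ near $|\eta|=\gm_0^2$ renders $\widehat{u}$ globally smooth at the cost of an additional smooth, rapidly decreasing error, absorbed in the next step.)

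It remains to solve $Lu=g$ for the low-frequency remainder, whose transform is supported in $\{|\eta|\le\gm_0^2\}$. There $(\xi,\eta)$ runs over $\BR\times[-\gm_0^2,\gm_0^2]$ and, because $P_n(iz,0)=z^n$, the operator $P(\partial_x,-i(\xi+\eta x))$ is a \emph{monic} $n$-th order differential operator in $x$ whose coefficients are polynomial in $(x,\xi)$ with $\eta$ confined to a compact set; one solves it by a fundamental system and variation of parameters, choosing the branches recessive at $x\to\pm\infty$ so that the solution and its derivatives remain polynomially bounded, with polynomial-in-$\xi$ control of the Green's kernel coming from a Gr\"onwall estimate. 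This is exactly the regime already handled in \cite{w1}, and it is insensitive to which lower-order terms $Q$ occur. Summing the two pieces gives the required global solution.

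I expect the \textbf{main obstacle} to be the bookkeeping of the two preceding paragraphs rather than any single deep step: extracting from clause~(3) of Definition~\ref{def1} a tempered bound for $\widehat{u}$ that is genuinely uniform in $x$ on compacta and survives the substitution $t=\gm x\pm\xi/\gm$ (harmless precisely because $\gm\ge\gm_0>0$ on this region); ensuring that the parametrix can be chosen to depend well enough on the parameters $(\xi,\eta)$ and $\gm$ for $\widehat{u}$ to define a genuine distribution; and gluing the high- and low-frequency constructions smoothly across $|\eta|=\gm_0^2$. The low-frequency analysis, though not automatic, is routine ordinary differential equations.
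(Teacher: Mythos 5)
Your high-frequency step---transferring the regular parametrix through the substitution $t=\gm x\pm\xi/\gm$ and converting clause (3) of Definition~\ref{def1} into a polynomial bound in $(x,\xi,\eta)$ on $\{|\eta|>\gm_0^2\}$---is sound and tracks the intended use of the hypothesis. The trouble is the low-frequency remainder, which you dismiss as ``routine ODE'' and ``exactly the regime already handled in \cite{w1}.'' That is not what the present paper does, and the paper explicitly says why it cannot do that: the opening of Section~\ref{s3} remarks that the singularity cancellation used in \cite{w1,w2,w3} (solving $P_n(i\partial_t,t)f=g$ for $g$ with zeros of suitable order) is unavailable once lower-order terms are present, and that the singularity at $\gm=0$ must instead be ``bypassed'' by extending $\gm$ into the complex plane. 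Proposition~\ref{complex} and the Wronskian/adjoint estimates of Proposition~\ref{thehs} on the complex sectors $\fU_\alpha^\pm$ exist precisely to make that deformation possible; they are the machinery the proof of Lemma~\ref{lem1} feeds on. Your proposal uses none of it, so it is certainly a different route from the paper's---and, more seriously, I do not think your substitute argument closes.

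Concretely: on $\{|\eta|\le\gm_0^2\}$ you must solve $P(\partial_x,-i(\xi+\eta x))\widehat u=\widehat g$ with bounds polynomial in $\xi$ as $|\xi|\to\infty$. A Gr\"onwall estimate on a compact $x$-interval gives $\exp\bigl(C\int\| \text{coefficients}\|\bigr)$, and the coefficients grow polynomially in $\xi$, so Gr\"onwall by itself gives only exponential-in-$\xi$ control, not the polynomial control you need for temperedness. Getting polynomial bounds requires exploiting the oscillatory/decaying structure of a recessive fundamental system, and the existence of uniformly recessive solutions as $\xi\to\infty$ with controlled Wronskians is exactly the nontrivial content the paper supplies via the complexified estimates; it is not automatic. (It also cannot literally be quoted from \cite{w1}, since the characteristic roots of $P(\partial_x,-i\xi)$ now carry the lower-order terms $Q$, and the whole point of the paper is that $Q$ changes the analysis at small $\gm$.) A secondary gap: you invoke a parametrix ``depending linearly and smoothly on its datum and on $\gm$,'' but Definition~\ref{def1} only asserts, for each bounded smooth $g$, the existence of some $F$; linearity in $g$ and joint smooth dependence on $(\xi,\eta)$ are extra properties you would need to extract from the actual construction (variation of constants against the bases of Propositions~\ref{moremainests}--\ref{thehs}), not from the definition. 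Until the low-frequency step is replaced by the complex-$\gm$ argument (or an equally careful substitute that actually delivers polynomial-in-$\xi$ bounds), the proof is incomplete at its crux.
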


From the above lemma we will obtain the following results:
\begin{thm}\label{thm1} For the operator $L$ as in (\ref{op})
suppose that the (generic) polynomial $P_n$
has characteristic roots $\gamma_j$ all with non-zero real parts. Then $L$ is locally solvable
if $P_n(X,Y)$ is locally solvable.
\end{thm}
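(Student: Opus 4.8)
\emph{Proof sketch.} The plan is to verify the hypothesis of Lemma~\ref{lem1}: for each of the two signs it is enough to produce a regular parametrix for $\cL_{\gm}^{\pm}$ on $\BR\times(\gm_0,\infty)$ for some $\gm_0>0$. Through the scaling (\ref{rep}), local solvability of the homogeneous operator $P_n(X,Y)$ is reflected in the model operators $\cL_{\infty}^{\pm}=P_n(i\partial_t,\pm t)$ (and their adjoints) admitting regular parametrices in the sense of Definition~\ref{def1}; this is the input we perturb to large finite $\gm$.

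First I would analyse the outer region $|t|\ge T_0$. Inserting a WKB ansatz $F\sim e^{S(t)}$ into $\cL_{\gm}^{\pm}F=0$ and using the homogeneity of $P_n$ reduces the eikonal equation to $P_n\bigl(iS'/(\pm t),1\bigr)=\prod_{j=1}^{n}\bigl(S'/(\pm t)-\gamma_j\bigr)=0$, so the $n$ leading WKB exponents are $S(t)\sim\pm\tfrac12\gamma_j t^{2}$. The hypothesis $\mathrm{Re}\,\gamma_j\neq0$ for every $j$ is exactly what makes these split, both as $t\to+\infty$ and as $t\to-\infty$, into an exponentially decaying group and an exponentially growing group with no marginal oscillatory mode --- an exponential dichotomy. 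One can then invert $\cL_{\gm}^{\pm}$ on $\{|t|\ge T_0\}$ by the associated variation-of-parameters (Green's function) formula, whose exponential gains yield bounds on $F$ and its $t$-derivatives far stronger than the polynomial growth permitted by Definition~\ref{def1}. Since $\cL_{\gm}^{\pm}$ differs from $\cL_{\infty}^{\pm}$ only by terms of negative order in $\gm$ (see (\ref{rep})), the lower-order pieces $P_l$, $l<n$, perturb only the sub-leading WKB data once $\gm_0 T_0$ is chosen large, so the dichotomy, and hence this inversion, is uniform for $\gm>\gm_0$.

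Next I would treat the compact range $|t|\le T_0$, where $\cL_{\gm}^{\pm}$ is an ordinary differential operator whose coefficients depend smoothly (indeed analytically) on $1/\gm$ and reduce to $\cL_{\infty}^{\pm}$ at $1/\gm=0$. Using the regular parametrix of $\cL_{\infty}^{\pm}$ supplied by the hypothesis, together with a Neumann (Duhamel) expansion in the parameter $1/\gm$, one continues the outer solution across $|t|\le T_0$; a partition of unity in $t$ then glues the inner and outer pieces, the mismatch being supported only where the cutoff is differentiated and being negligible for $\gm$ large, hence removable by one further iteration. Tracking the constants through these three steps gives the estimate of Definition~\ref{def1} with $a$ and $C$ independent of $\gm>\gm_0$, and Lemma~\ref{lem1}, applied to both signs, yields local solvability of $L$.

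The step I expect to be the main obstacle is uniformity in the parameter: the Green's function estimates on $|t|\ge T_0$, the choice of matching radius $T_0$, and the convergence of the Neumann series on $|t|\le T_0$ must all be controlled uniformly as $\gm\to\infty$, and the glued approximate inverse must be shown to carry genuinely negligible error. A secondary subtlety is the turning-point region near $t=0$, where the exponents $\pm\tfrac12\gamma_j t^{2}$ are small; this is folded into the compact-$t$ analysis, and it is again the condition $\mathrm{Re}\,\gamma_j\neq0$ that rules out a resonance there and keeps the construction stable under the lower-order terms --- which is precisely the role of that hypothesis in the theorem.
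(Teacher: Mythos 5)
Your broad strategy is the paper's: verify the hypothesis of Lemma~\ref{lem1} by producing regular parametrices for $\cL_{\gm}^{\pm}$ on $\BR\times(\gm_0,\infty)$, using the WKB exponents $\Phi_j\sim\gamma_j t^2/2$ (Proposition~\ref{moremainests}) together with variation of parameters (Proposition~\ref{thehs}). On the mechanics, however, the paper does not glue an outer region $\{|t|\geq T_0\}$ to a compact inner one via a partition of unity and a Neumann series. It assembles a single global variation-of-constants integral $F(t,\gm)=\sum_j\psi_j(t,\gm)\int_{c_j}^{t}h_j(s,\gm)g(s)\,ds$, with the endpoint $c_j$ taken at $0$ or $\pm\infty$ according to the sign of $\mathrm{Re}\,\gamma_j$, then appeals to Proposition~\ref{dunno} for the polynomial bounds of Definition~\ref{def1} and to the transition-matrix apparatus of \cite{w3} (see Remark~\ref{w3stuff}) to pass the bases $\vec{\psi}^{\pm}$ across $t=0$. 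Your gluing could possibly be made to work, but it is a genuinely different decomposition, and it puts the burden exactly where the matching matrix lives.

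The substantive gap is in your final paragraph, where you attribute the absence of resonance near $t=0$ to $\mathrm{Re}\,\gamma_j\neq 0$. That condition gives the exponential dichotomy at infinity and makes the estimates of Proposition~\ref{moremainests} uniform for $\gm\geq\gm_0$; in other words, it is what makes the passage from $\gm=\infty$ to large finite $\gm$ a stable perturbation. It does not, by itself, preclude a Schwartz element of $\ker(\cL_{\gm}^{\pm})^{*}$. Theorem~\ref{thm3} shows this concretely: if $|\{j:\mathrm{Re}\,\gamma_j>0\}|>n/2$ then every root has nonzero real part and yet $L$ is \emph{not} locally solvable. The non-resonance is supplied by the other hypothesis, that $P_n(X,Y)$ is locally solvable, which by \cite{w1} is tied to $\ker(\cL_{\infty}^{\pm})^{*}\cap\cS(\BR)=\{0\}$. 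You do invoke the regular parametrix for $\cL_{\infty}^{\pm}$ as ``input,'' which is the right instinct, but then re-assign to $\mathrm{Re}\,\gamma_j\neq0$ the role that this input actually plays; as written, your argument would ``prove'' solvability in the unbalanced cases ruled out by Theorem~\ref{thm3}. The sketch should state explicitly that the solvability hypothesis on $P_n(X,Y)$ furnishes the non-resonance at $\gm=\infty$, and that $\mathrm{Re}\,\gamma_j\neq0$ is what transports it to $\gm>\gm_0$.
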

From \cite{w1} we have immediately
\begin{cor}
The operator $L$ of Theorem \ref{thm1} is locally solvable if ker$(\cL^{\pm}_{\infty})^*$$\bigcap$$\cS(\BR)$ $=\{0\}$ for both choices
of $\pm$ sign.
\end{cor}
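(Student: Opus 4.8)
The plan is to deduce the Corollary directly from Theorem~\ref{thm1} by verifying its hypothesis: that $P_n(X,Y)$ is itself locally solvable. This is where the cited paper \cite{w1} enters. First I would recall the criterion from \cite{w1} characterizing local solvability of the homogeneous operator $P_n(X,Y)$ in terms of the limiting operators $\cL_\infty^\pm = P_n(i\partial_t,\pm t)$: namely, $P_n(X,Y)$ is locally solvable precisely when the kernels of the formal adjoints $(\cL_\infty^\pm)^*$, intersected with the Schwartz space $\cS(\BR)$, are trivial for \emph{both} sign choices. The point is that the scaling $X\to\gm\partial_t$, $Y\to\mp i\gm t$ collapses, after extracting the homogeneity factor $(-i\gm)^n$, to the $\gm$-independent operators $\cL_\infty^\pm$; solvability of $P_n(X,Y)$ is governed by whether one can invert these model operators modulo their finite-dimensional obstruction spaces, and the obstruction is exactly $\ker(\cL_\infty^\pm)^*\cap\cS(\BR)$.

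Granting that criterion, the argument is then immediate: the hypothesis of the Corollary is that $\ker(\cL_\infty^\pm)^*\cap\cS(\BR)=\{0\}$ for both signs, which is exactly the condition from \cite{w1} ensuring $P_n(X,Y)$ is locally solvable. Since $P_n$ is assumed generic and its characteristic roots $\gamma_j$ all have non-zero real parts (the standing hypotheses of Theorem~\ref{thm1} are inherited here), Theorem~\ref{thm1} applies and yields that $L=P(X,Y)$ is locally solvable. So the Corollary is a two-line consequence: (i) the kernel condition $\Longrightarrow$ $P_n(X,Y)$ locally solvable, by \cite{w1}; (ii) $P_n(X,Y)$ locally solvable $\Longrightarrow$ $L$ locally solvable, by Theorem~\ref{thm1}.

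The only real content to check is that the genericity and non-zero-real-part hypotheses carried by $P_n$ in Theorem~\ref{thm1} are compatible with—indeed do not interfere with—the kernel criterion quoted from \cite{w1}; one should confirm that the statement invoked from \cite{w1} is stated for exactly this class of generic $P_n$ (or at least that the subclass with $\mathrm{Re}\,\gamma_j\neq0$ is covered), so that there is no gap between the hypothesis "$\ker(\cL_\infty^\pm)^*\cap\cS(\BR)=\{0\}$" and the conclusion "$P_n(X,Y)$ locally solvable." I expect this to be the main (and essentially only) obstacle: it is a matter of matching the precise formulation in \cite{w1} to the present setup rather than a substantive new estimate. Once that bookkeeping is done, the Corollary follows formally from Theorem~\ref{thm1}.
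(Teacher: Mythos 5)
Your proposal matches the paper's argument exactly: the paper states ``From \cite{w1} we have immediately'' before the Corollary, meaning precisely that the criterion from \cite{w1} (triviality of $\ker(\cL_\infty^\pm)^*\cap\cS(\BR)$ for both signs implies local solvability of $P_n(X,Y)$) is combined with Theorem~\ref{thm1} to conclude local solvability of $L$. Your two-step chain --- kernel condition $\Rightarrow$ $P_n(X,Y)$ solvable via \cite{w1}, then Theorem~\ref{thm1} $\Rightarrow$ $L$ solvable --- is the intended and correct proof.
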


Results on non-solvability we are ready to state are as follows:
\begin{thm}\label{thm2}
Suppose $L$ is as in (\ref{op}) for some generic $P_n$. Then
$L$ is not locally solvable if, for some choice of $\pm$ sign, the set of parameters
$\gm\in\BR^+$: ker$(\cL^{\pm}_{\gm})^*$$\bigcap$$\cS(\BR)\setminus\{0\}$ $\neq\emptyset$ has a
limit point in $\BR^+.$
\end{thm}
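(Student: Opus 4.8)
The plan is to establish non-solvability by the standard duality/functional-analytic mechanism: if $L$ were locally solvable near the origin, then for every $f\in C_0^\infty$ supported in a small neighborhood $U$ one would have an a priori estimate of the form $|\langle f,\phi\rangle|\le C\|L^*\phi\|_{(s)}$ for all $\phi\in C_0^\infty(U)$, in suitable Sobolev norms (Hörmander's classical argument, cf.\ \cite{ho1}). To contradict this we need a sequence of test functions $\phi_k$ that nearly annihilate $L^*$ while remaining nontrivially paired against a fixed $f$. The hypothesis hands us, for (say) the $+$ sign, a sequence of parameters $\gm_k\to\gm_\infty\in\BR^+$ together with nonzero Schwartz functions $\psi_k\in\ker(\cL_{\gm_k}^+)^*\cap\cS(\BR)$; these will be the raw material for building the $\phi_k$.

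First I would invert the reduction that produced the representations $\cL_\gm^\pm$ in \eqref{transform}--\eqref{rep}. Concretely: given $\psi_k$ annihilating $(\cL_{\gm_k}^+)^*$, undo the change of variables $t=x\gm\pm\xi/\gm$, $\gm=\sqrt{|\eta|}$ to produce a function on $\BR^3$ whose partial Fourier transform in $(y,w)$ is concentrated near $\eta=\gm_k^2$ and which lies (approximately) in $\ker L^*$. Because a single value $\eta=\gm_k^2$ gives a distribution, not an $L^2$ function, I would smear in $\eta$ against a bump of width $\delta_k\to 0$ centered at $\gm_k^2$; the Schwartz decay of $\psi_k$ in $t$ guarantees the resulting $\phi_k$ is genuinely in $\cS(\BR^3)$ (hence, after multiplying by a fixed cutoff, in $C_0^\infty(U)$ up to a controlled error). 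The key quantitative point is that $L^*\phi_k$ picks up two small contributions: one from the $\eta$-derivatives hitting the bump (size $O(\delta_k)$ times polynomial factors, since differentiating in $\eta$ costs powers of $t$ and $\gm$ but $\psi_k$ decays) and one from the fact that $(\cL_{\gm}^+)^*\psi_k$ is exactly zero only at $\gm=\gm_k$ but merely small for $|\gm^2-\gm_k^2|<\delta_k$, which is again $O(\delta_k)$ by continuity of the coefficients of $\cL_\gm^\pm$ in $\gm$ on the compact parameter interval. Meanwhile $\|\phi_k\|$ and $\langle f,\phi_k\rangle$ can be normalized to stay bounded below for a suitable fixed $f$, using that the $\gm_k$ accumulate at a single point $\gm_\infty$ so the frequency supports overlap. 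Letting $\delta_k\to0$ then violates any fixed a priori estimate, giving the contradiction.

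The existence of a genuine \emph{limit point} is exactly what makes this work rather than merely an infinite discrete set escaping to $0$ or $\infty$: it pins the construction to a compact $\gm$-interval on which the coefficients of $\cL_\gm^\pm$, and the inverse change of variables, are uniformly smooth and nondegenerate (recall $\gm=0$ is the singular locus, so accumulation at $0$ would be useless, and accumulation at $\infty$ would wreck the uniformity of the polynomial bounds). It also lets me choose the $\eta$-widths $\delta_k$ without the bumps separating, so that the test functions genuinely compete for the same $f$.

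The main obstacle I anticipate is the quantitative bookkeeping in the smearing step: one must show $\|L^*\phi_k\|_{(s)}\to 0$ in the \emph{same} Sobolev norm that appears on the right of the solvability estimate, and this requires tracking how many $\eta$-derivatives land on the bump, how the resulting powers of $t$ interact with the (only polynomially controlled, not uniformly Schwartz) family $\psi_k$, and how the fixed spatial cutoff $\chi\in C_0^\infty(U)$ contributes commutator terms $[\,L^*,\chi\,]\phi_k$ that must also be shown to vanish. Handling $[\,L^*,\chi\,]$ is where the lower-order terms $Q(X,Y)$ re-enter, but since commuting a cutoff past $L$ strictly lowers order, these terms are controlled by the same mechanism. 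The cleanest route is probably to first prove the estimate for $L=P_n$ exactly as in \cite{w1,w2,w3} and then absorb $Q$ and the cutoff commutator as lower-order perturbations that are $o(1)$ in the relevant norm as $\delta_k\to 0$.
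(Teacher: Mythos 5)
Your overall framework---Hörmander's a priori inequality plus test functions built from the $\ker(\cL_{\gm_k}^\pm)^*$ elements via the inverse of the representation \eqref{rep}---is the right starting point. But the crucial step that you flag as a bookkeeping concern is actually a fatal obstruction to the argument as you have set it up, and your dismissal of the cutoff commutator as ``controlled by the same mechanism'' does not hold.

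Concretely, take a single bad parameter $\gm_0$ with $\psi\in\ker(\cL_{\gm_0}^+)^*\cap\cS$ and form $\hat\phi_\delta(x,\xi,\eta)=\psi\bigl(t(x,\xi,\eta)\bigr)\chi_\delta(\eta-\gm_0^2)$ with $\chi_\delta$ a bump of height $1$ and width $\delta$. On any \emph{fixed} ball in $(y,w)$ one has $|\phi_\delta|\sim\delta$, so both $\langle f,\zeta\phi_\delta\rangle$ and $\|\zeta\phi_\delta\|_{L^2}$ are of size $\delta$. The ``good'' term $\zeta L^*\phi_\delta$ is indeed $O(\delta^2)$, because $(\cL_{\gm}^+)^*\psi=O(|\gm-\gm_0|)=O(\delta)$ on the bump. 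But the commutator $[L^*,\zeta]\phi_\delta$ is an order-$(n-1)$ operator with coefficients supported on the fixed annulus where $\nabla\zeta\neq0$; there $\phi_\delta$ and its derivatives are again of size $\sim\delta$ (they do not shrink, since the $w$-localization of $\phi_\delta$ is on scale $1/\delta$). Hence $\|[L^*,\zeta]\phi_\delta\|\sim\delta$, which is the \emph{same} order as $|\langle f,\zeta\phi_\delta\rangle|$. The Sobolev indices do not rescue you here: the $\eta$-frequency is pinned near $\gm_0^2$, so positive and negative Sobolev norms are all comparable to $L^2$ on this family. Thus $\|L^*(\zeta\phi_\delta)\|_{(N')}$ and $|\langle f,\zeta\phi_\delta\rangle|$ are both $\asymp\delta$, and no choice of $\delta_k\to0$ produces a contradiction. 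The lower-order terms $Q(X,Y)$ are not the issue; the commutator with the cutoff in the \emph{top-order} part of $L^*$ is.

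This also shows that the role you assign to the limit point (``pinning to a compact $\gm$-interval'' and ``keeping the bumps from separating'') is not the essential one: a single $\gm_0\in\BR^+$ already gives a compact interval, yet the construction above fails for it. The hypothesis that the bad parameters accumulate at some $\gm_\infty\in\BR^+$ must enter in a structurally different way---for instance, to replace the one bump of shrinking width by a construction over a \emph{fixed} $\eta$-window on which $(\cL_\gm^+)^*$ can be almost-annihilated uniformly, thereby keeping the $w$-support (and hence the cutoff commutator) under control. As written, your proposal does not contain the idea needed to beat the commutator term, and simply sending $\delta_k\to0$ at a cluster of frequencies will not do it.
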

\begin{thm} \label{thm3} An operator as in Theorem \ref{thm1} is not locally solvable if the cardinality
of either $\{\gamma_j|{\text Re}\gamma_j>0\}$ or $\{\gamma_j|{\text Re}\gamma_j<0\}$ is greater than $n/2.$
\end{thm}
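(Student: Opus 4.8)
The plan is to invoke Theorem~\ref{thm2}: it suffices to exhibit one choice of sign $\pm$ and a subset $S\subseteq\BR^{+}$ with a limit point in $\BR^{+}$ such that $\ker(\cL^{\pm}_{\gm})^{*}\cap\cS(\BR)\neq\{0\}$ for every $\gm\in S$, and I will in fact produce such kernel elements for $S=\BR^{+}$. Throughout set $k_{+}=\#\{j:\operatorname{Re}\gamma_j>0\}$ and $k_{-}=\#\{j:\operatorname{Re}\gamma_j<0\}$; since $L$ is as in Theorem~\ref{thm1}, no $\gamma_j$ is purely imaginary, so $k_{+}+k_{-}=n$ and the hypothesis of the theorem reads exactly $\max(k_{+},k_{-})>n/2$.

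First I would record the shape of the operators involved. Writing $\cL^{+}_{\gm}=\sum_{l=0}^{n}\gm^{n-l}P_l(i\partial_t,t)=\sum_{j=0}^{n}a_j(t)\partial_t^{j}$, commuting all derivatives to the right shows each $a_j$ is a polynomial with $\deg a_j\leq n-j$, that $a_n$ is the nonzero constant $i^{n}$, and that the coefficient of $t^{n-j}$ in $a_j$ equals the coefficient of $z^{j}t^{n-j}$ in the commutative symbol $P_n(iz,t)$; by the hypotheses of Theorem~\ref{thm1} the associated polynomial $P_n(iz,1)$ has the $n$ distinct roots $\gamma_j$, all of nonzero real part, hence in particular nonzero constant term $P_n(0,1)$. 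Since $\gm>0$ is real, $i\partial_t$ and $\pm t$ are formally self-adjoint, so $(\cL^{\pm}_{\gm})^{*}$ is again an $n$-th order operator of the same shape; a short integration-by-parts computation identifies its top homogeneous symbol and shows its characteristic polynomial has the $n$ distinct, nonzero-real-part roots $\{-\overline{\gamma_j}\}$ for the $+$ sign and $\{\overline{\gamma_j}\}$ for the $-$ sign, so that the multiset of real parts of these roots is $\{-\operatorname{Re}\gamma_j\}$ and $\{\operatorname{Re}\gamma_j\}$ respectively.

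Next, and this is the crux, I would carry out the asymptotic analysis at $t\to\pm\infty$. Because $\deg a_j\leq n-j$ with equality along the top stratum (and the relevant leading coefficients are nonzero by the previous paragraph), the Newton polygon of $(\cL^{\pm}_{\gm})^{*}$ at $t=\infty$ is the segment from $(0,n)$ to $(n,0)$; a dominant-balance/WKB argument then forces every formal solution $u\sim e^{Q(t)}$ to have $Q'$ linear, the balance condition reducing to the characteristic equation above, whence $Q(t)=ct^{2}/2$ with $c$ one of the $n$ characteristic roots — a conclusion unaffected by the lower-order terms $P_l$, $l<n$, and by the value of $\gm$ (so it is the same analysis as for the homogeneous operator $\cL^{\pm}_{\infty}$ treated in \cite{w1}). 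Since these roots are distinct, the classical asymptotic-integration theory of irregular singular points (Hukuhara--Turrittin, or Levinson's theorem after a suitable gauge) furnishes near $+\infty$ a fundamental system $u_j(t)\sim e^{c_j t^{2}/2}\,t^{\rho_j}(1+o(1))$, and likewise near $-\infty$; since $t^{2}\to+\infty$ at either end, $u_j$ decays there precisely when $\operatorname{Re}c_j<0$. Hence for $(\cL^{\pm}_{\gm})^{*}$ the solutions decaying at $+\infty$ form a subspace of dimension equal to the number of characteristic roots with negative real part, and the same at $-\infty$; by the second paragraph this common dimension is $k_{+}$ for the $+$ sign and $k_{-}$ for the $-$ sign.

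Finally a dimension count closes the argument. In the $n$-dimensional solution space of $(\cL^{+}_{\gm})^{*}u=0$ the subspaces of solutions decaying at $+\infty$ and at $-\infty$ both have dimension $k_{+}$, so their intersection has dimension at least $2k_{+}-n$; a solution in this intersection decays in the Gaussian sense at both ends and, being a solution of an ODE with polynomial coefficients, is smooth, hence lies in $\cS(\BR)$. Likewise $(\cL^{-}_{\gm})^{*}$ has Schwartz kernel of dimension at least $2k_{-}-n$. Thus if $k_{+}>n/2$ the $+$ sign gives $\ker(\cL^{+}_{\gm})^{*}\cap\cS(\BR)\setminus\{0\}\neq\emptyset$ for every $\gm\in\BR^{+}$, while if $k_{-}>n/2$ the $-$ sign does; in either case the admissible set of $\gm$ is all of $\BR^{+}$, which certainly has a limit point in $\BR^{+}$, and Theorem~\ref{thm2} yields the non-solvability of $L$. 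The principal obstacle is the asymptotic-integration step of the third paragraph: one must verify rigorously that the lower-order contributions are genuinely subdominant at $t=\infty$, that the stated expansions hold uniformly enough to be used (and differentiated) termwise, and that passage to the formal adjoint preserves the count — once these are in place the remainder is linear algebra together with the citation of Theorem~\ref{thm2}.
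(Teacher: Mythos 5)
Your argument follows the paper's strategy: reduce to Theorem~\ref{thm2} by exhibiting, for the appropriate sign, a nonzero Schwartz element of $\ker(\cL^{\pm}_{\gm})^{*}$ for all large $\gm$, produced by a dimension count between the subspace of adjoint solutions decaying at $+\infty$ and the subspace decaying at $-\infty$. Your identification of the adjoint's characteristic (Gaussian) exponents as $\{-\overline{\gamma_j}\}$ for the $+$ sign and $\{\overline{\gamma_j}\}$ for the $-$ sign, hence $k_+$ and $k_-$ decaying directions at each end respectively, is correct; and the bound $\dim(V_+\cap V_-)\geq 2k_\pm-n>0$ then closes the argument as you say, with the set of admissible $\gm$ containing $(\gm_0,\infty)$ and hence having a limit point in $\BR^+$.

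The place where you do avoidable work is the third paragraph. You re-derive the needed asymptotic fundamental systems from scratch (Hukuhara--Turrittin/Levinson, dominant balance, adjoint computation), and you flag this as ``the principal obstacle'' requiring verification of subdominance, termwise differentiability, and uniformity. But those verifications are precisely the content of Sections~\ref{s2} and \ref{s3}: Proposition~\ref{moremainests} supplies, uniformly on $(\gm_0,\infty)$ and with derivative control, bases $\{\psi_k^{\pm}\}$ of $\ker\cL_{\gm}$ with the stated Gaussian asymptotics, and Proposition~\ref{thehs} converts these into the adjoint bases $\{\bar h_j^{\pm}\}$ of $\ker\cL_{\gm}^{*}$ satisfying $\partial_t^k h_j^{\pm}\lesssim e^{-\Re(\gamma_j t^2/2+\beta_j t/\gm)}(1+|t|)^{a+k}$ on the respective half-lines. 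Invoking these directly, the $\bar h_j^{\pm}$ with $\Re\gamma_j>0$ span $V_\pm$ (giving $\dim V_\pm\geq k_+$ for the $+$ sign, $\geq k_-$ for the $-$ sign), the derivative bounds place the intersection elements in $\cS(\BR)$ without further argument, and the only correction to your write-up is that $S$ need not be all of $\BR^+$---it contains $(\gm_0,\infty)$, which is enough for Theorem~\ref{thm2}. So your outline is sound and matches the paper's; the ``obstacle'' you anticipate is a non-issue once the paper's own estimates are cited.
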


The outline of the article is as follows: In Section \ref{s2} we establish estimates of bases
for ker$\cL_{\gm}$ for $(t,\gm)$ in real domains as in Lemma \ref{lem1}.  In Section \ref{s3} we
establish estimates with $(t,\gm)$ extended to certain complex domains. In Section \ref{s4}
we construct solutions to $Lu=f$ to prove Lemma \ref{lem1} and Theorem \ref{thm1}.
In Section \ref{s5} we provide particular results on non-solvability, including proofs of Theorems \ref{thm2} and \ref{thm3}.
In Section \ref{s6} we develop criteria for non-solvability by which we develop
exact conditions for some subclass of operators $L$. In Section \ref{s7} we apply our general
results to various classes of second-order operators and compare our results to those of a well-known subclass. \label{s1}
\section{Canonical bases for ker$\cL_{\gm}$}\label{s2}
We will introduce some notation which we will use throughout the remainder of the
article: Given functions $f$ and $g,$ the expression $f\lesssim g$ will mean $\exists C>0$ (fixed) so that $|f|\leq C|g|$ holds
on the specified domain; and, the expression $f\asymp g$ will mean that $f\lesssim g$ and $g\lesssim f$ both hold.

We now derive asymptotic estimates for certain bases of
ker$\cL_{\gm}$ for real $\gm$ following a diagonalization procedure in \cite{c}.
We note that for each
$l$ $P_l(i\partdert,t)$ can be written in the form
$$P_l(i\partdert,t) = \sum_{j=0}^lt^{n-j}q_{l,j}(t)\partdertn{j}$$
for $q_{l,j}(t) = d_{l,j} + \tilde{q}_{l,j}(t)$ where the $d_{l,j}$
are complex constants (vanishing for $j>l$) and
$\tilde{q}_{l,j}(t)=\sum_{0<2m\leq n-j}\frac{e_{l,j,m}}{t^{2m}}$
for some complex constants $e_{l,j,m}$ (see equation (2.4) of
[C]); in fact,
$d_{n,n-1}=-i\sum_{j=0}^n\gc_j.$

By way of rearrangement, we write ${\cal L}_{\gm}=$
\begin{align}
 \sum_{l=0}^n\frac{1}{\gm^{n-l}}P_l(i\partdert,t)
&= \sum_{l=0}^n\gm^{-l}\sum_{j=0}^l
t^{l-j}q_{l,j}(t)\partdertn{j}
\notag\\
=\sum_{j=0}^n[\sum_{l=0}^{n-j}\frac{1}{\gm^{l}}
t^{n-l-j}q_{n-l,j}(t)]\partdertn{j}
&=\sum_{j=0}^n
t^{n-j}[\sum_{l=0}^{n-j}\frac{1}{\gm^{l}}
\frac{1}{t^{l}}
q_{n-l,j}(t)]
\partdertn{j}\notag\\
&=\sum_{j=0}^nt^{n-j}
Q_{j}(t,\gm)\partdertn{j}&
\notag
\end{align}
for
$Q_j(t,\gm)=q_{n,j}(t)+\sum_{l=1}^{n-j}
\frac{1}{(\gm t)^{l}}q_{n-l,j}(t)$
(a vacuous sum is taken to be zero).
Therefore,
$$Q_j(t,\gm) = d_{n,j} + \frac{d_{n-1,j}}{\gm t}+
\left(e_{n,j,1}+\frac{d_{n-2,j}}{\gm^2}\right)
\frac{1}{t^{2}}+\epsilon_j(t,\gm)$$ where
$\epsilon_j(t,\gm)$ may be expressed as a linear combination with
complex coefficients of monomials of the form
$\frac{1}{\gm^{a}t^{b}}$ for integers $a\geq 0$ and
$3\leq b\leq l$.  We note that, for $t$ restricted to any compact
subset of ${\Bbb R}$, $Q_j(t,\gm)$ converges to $q_j(t,\gm)$
uniformly as $\gm \rightarrow \infty.$

Let us
formulate the differential equation ${\cal L}_{\gm}y =0$ in terms
of an equivalent matrix equation in that ${\cal L}_{\gm}f=0$ if
and only if
$u^{\prime}=Au$ where
$u$ is the column vector
$(f,f^{\prime},\hdots,f^{(n-1)})^{\dagger},$ with
$\dagger$ denoting transpose, and where $A=$
$$\left( \begin{matrix}
0&1&0&\hdots&0\\
0&0&1&\hdots&0\\
\vdots&\vdots&\vdots&&\vdots\\
-t^nQ_0(t,\gm)&-t^{n-1}Q_1(t,\gm)&
-t^{n-2}Q_2(t,\gm)&\hdots&-tQ_{n-1}(t,\gm)
\end{matrix}
\right)
$$

We now seek to diagonalize $A$ modulo
appropriate error terms.
We single out the lower-order terms
of $A$ as we define its principal part
$A_0$ by
$$A_0=\left( \begin{matrix}
0&1&0&\hdots&0\\
0&0&1&\hdots&0\\
\vdots&\vdots&\vdots&&\vdots\\
-t^nd_0&-t^{n-1}d_1&
-t^{n-2}d_2&\hdots&-td_{n-1}
\end{matrix}
\right).
$$
We also define
$$S_0=\left( \begin{matrix}
1&1&1&\hdots&1\\
\gamma_1 t&\gamma_2 t&\gamma_3 t&\hdots&\gamma_n t\\
(\gamma_1 t)^2&(\gamma_2 t)^2&(\gamma_3 t)^2&\hdots&(\gamma_n t)^2\\
\vdots&\vdots&\vdots&&\vdots\\
(\gamma_1 t)^{n-1}&(\gamma_2 t)^{n-1}&(\gamma_3 t)^{n-1}
&\hdots&(\gamma_n t)^{n-1}\\
\end{matrix}
\right)
$$
and
$$\Lambda_0=\left( \begin{matrix}
\gamma_1t & 0 & 0 &\hdots&0\\
0&\gamma_2t&0 &\hdots &0\\
\vdots& \vdots&\vdots&\ddots&\vdots\\
0&0&0&\hdots&\gamma_nt
\end{matrix}
\right).
$$
We note that $S_0$ diagonalizes $A_0$ in that $A_0S_0=S_0\Lambda_0.$
We consider higher-order terms (in $t$) by setting (formally)
$A = A_0+\cE_1 +\cE_2 +\cE_3$
for
\begin{equation}\label{cE}
\cE_1 \df
\left( \begin{matrix}
0&0&\hdots&0\\
\vdots&\vdots&&\vdots\\
0&0&\hdots&0\\
a_1t^{n-1}&a_2t^{n-2}&\hdots&a_n1
\end{matrix}
\right),
\end{equation}
$$
\cE_2 \df
\left( \begin{matrix}
0&0&\hdots&0\\
\vdots&\vdots&&\vdots\\
0&0&\hdots&0\\
b_1t^{n-2}&b_2t^{n-3}&\hdots&b_{n}t^{-1}
\end{matrix}
\right)
$$
%($b_k=0$ for $k=n$) and
$$
\cE_3 \df
\left( \begin{matrix}
0&0&\hdots&0\\
\vdots&\vdots&&\vdots\\
0&0&\hdots&0\\
-\epsilon_1(t,\gm)t^{n-3}&-\epsilon_2(t,\gm)t^{n-4}
 &\hdots&-\epsilon_n(t,\gm)t^{-2}
\end{matrix}
\right)
$$
($\epsilon_k=0$ for $k\geq n-2$). Here the non-vanishing
coefficients are given by $a_j$ $=$ $\frac{-d_{n-1,j-1}}{\gm},$
$b_j$ $=$ $-(e_{n,j,1}+\frac{d_{n-2,j-2}}{\gm^2})$ and
$\epsilon_j(t,\gm)$ as above for $1\leq j\leq n$.

Regarding $S_0$ $=S_0(t)$ as a matrix-valued function
of $t$, it is not difficult to show
$$\det S_0(t) = t^{\frac{n(n-1)}{2}}\det S_0(1).
$$
Since $S_0(1)$ is a VanderMonde Matrix,
$S_0(t)$ is invertible for all $t\neq 0:$
Indeed,
$$[S_0^{-1}(t)]_{i,j} = \frac{1}{t^{j-1}}
[S_0^{-1}(1)]_{i,j}.
$$

We define
$S=S_0(I+ {\cal A}+\Delta )$
where
${\cal A}\df \frac{1}{t}(\ga_{i,j})_{1\leq i,j\leq n}\,\text{\rm and}\,
\Delta\df \frac{1}{t^2}(\gd_{i,j})_{1\leq i,j\leq n}$
for complex constants $\ga_{i,j}$ and $\gd_{i,j}$ to be
determined: We set the diagonal elements $\ga_{j,j}=\gd_{j,j}=0$.
Formally, we write
\begin{align}
S^{-1}&=(I +\cA+\gD)^{-1}S_0^{-1}\notag\\
&=I-(\cA+\gD)(I+\cA+\gD)^{-1}\notag\\
&=I-(\cA+\gD)+(\cA+\gD)^2(I+\cA+\gD)^{-1}\notag
\end{align}

To appraise the error terms in the diagonalization
$S^{-1}AS$ we introduce the notation
$A=O(t^p)$, meaning that all entries of the matrix
$A$ are majorized by $t^p$ uniformly for all sufficiently
large $t$ and $\gm.$
Let us define
$\cD_i\df S_0^{-1}\cE_iS_0$ for $i=1,2,3.$ Since $\cE_1S_0=$
$$
\left( \begin{matrix}
0&0&\hdots&0\\
\vdots&\vdots&&\vdots\\
0&0&\hdots&0\\
\sum_{j=1}^na_j\gamma_1^{j-1}t^{n-1}&\sum_{j=1}^na_j\gamma_2^{j-1}t^{n-1}
&\hdots&\sum_{j=1}^na_j\gamma_n^{j-1}t^{n-1}
\end{matrix}
\right)
$$
we see that $[\cD_1]_{i,j}$, the $i,j$th element of
$\cD_1$, is given by
\begin{equation}
[\cD_1]_{i,j} = [S_0^{-1}(t)]_{i,n}\sum_{k=1}^n
a_{k}\gamma_j^{k-1}t^{n-1}
=[S_0^{-1}(1)]_{i,n}\sum_{k=1}^n
a_{k}\gamma_j^{k-1}
\label{D1}\end{equation}
so that $\cD_1=O(1)$ is constant
with respect to $t$; in fact, $\cD_1$ $\rightarrow$
$0^{n\times n}$ as $\gm$ $\rightarrow$ $\infty$.

Likewise, $\cE_2S_0=$ $$ \left( \begin{matrix}
0&0&\hdots&0\\
\vdots&\vdots&&\vdots\\
0&0&\hdots&0\\
\sum_{j=1}^nb_j\gamma_1^{j-1}t^{n-2}&\sum_{j=1}^nb_j\gamma_2^{j-1}t^{n-2}
&\hdots&\sum_{j=1}^nb_j\gamma_n^{j-1}t^{n-2}
\end{matrix}
\right)
$$
and
$$
[\cD_2]_{i,j} = [S_0^{-1}(t)]_{i,n}(\sum_{k=1}^n
b_k\gamma_j^{k-1})t^{n-2}
=\frac{1}{t}[S_0^{-1}(1)]_{i,n}\sum_{k=1}^n b_k\gamma_j^{k-1} \notag
$$
so that $\cD_2=O({t^{-1}}).$ It follows
similarly that $\cD_3 = O({t^{-2}}).$

We proceed with diagonalization of  $A$ as we compute
\begin{align}
&S^{-1}AS
\notag\\
=&[I -(\cA +\gD)+(\cA^2+\cA\gD+\gD\cA+\gD^2)(I+\cA+\gD)^{-1}]S_0^{-1}
\notag\\
\circ
& (A_0 + \cE_1+\cE_2+\cE_3)S_0[I+\cA+\gD]
\notag\\
=&
[I -(\cA +\gD)+(\cA^2+\cA\gD+\gD\cA+\gD^2)(I+\cA+\gD)^{-1}]
\notag\\
\circ
&(\Lambda_0+\cD_1+\cD_2+\cD_3)[I+\cA+\gD]
\label{SinvAS}
\end{align}

Given fixed $\ga_{i,j}$ and $\gd_{i,j}$, the
matrix $I+\cA+\gD$ is invertible for all sufficiently
large $t$ for (say) $\gm\geq 1$ and $(I+\cA+\gD)^{-1}$ $=$
$O(1)$.
Upon multiplying out (\ref{SinvAS}) and collecting terms
up to $O(t^{-2}),$ one obtains
\begin{align}
S^{-1}AS=&
\Lambda_0\label{mult1a}\\
+&\cD_1 - [\cA,\Lambda_0]\label{mult1b}\\
+&\cD_2-[\cA,\cD_1]+\cA[\cA,\gL_0]-[\gD,\gL_0]\label{mult1c}\\
+&O(t^{-2})\notag
\end{align}

The elements of (\ref{mult1a}) are known from
above; each summand in (\ref{mult1b})
is $O(1)$, indeed constant w.r.t. $t$; and, each summand in (\ref{mult1c})
is $O({t^{-1}})$ and, more precisely,
is a product of $\frac{1}{t}$ times a matrix
that is constant with respect to $t.$

We make the substitution
$v=S^{-1}u$ so that the differential equation
becomes
$v^{\prime}=Bv$
for
$B=S^{-1}AS-S^{-1}S^{\prime}.$
We first estimate $S^{-1}S^{\prime}$ according to
\begin{align} S^{-1}S^{\prime}&=
S_0^{-1}S_0^{\prime}-(I+\cA+\gD)^{-1}(\cA+\gD)S_0^{-1}S_0^{\prime}\notag\\
+&S^{-1}S^{\prime}\gD +S^{-1}S_0^{\prime}\cA
+S^{-1}S_0(\cA^{\prime}+\gD^{\prime})\notag.
\end{align}
To estimate $S^{-1}S^{\prime}$ modulo terms of order $O(t^{-2})$, we
find that it suffices to estimate $S_0^{-1}S_0^{\prime}:$ We have
$$
S_0^{\prime}=
\left( \begin{matrix}
0&0&\hdots&0\\
\gamma_1&\gamma_2&\hdots&\gamma_n\\
\vdots&\vdots&&\vdots\\
(n-1)\gamma_1^{n-1}t^{n-2}&(n-1)\gamma_2^{n-1}t^{n-2}&
\hdots&(n-1)\gamma_n^{n-1}t^{n-2}\\
\end{matrix}
\right);
$$
so, the $i,j$th element of $S_0^{-1}S_0^{\prime}$ is given by
\begin{align}
[S_0^{-1}S_0^{\prime}]_{i,j} &= \sum_{k=1}^n[S_0^{-1}(t)]_{i,k}
(k-1)\gamma_j^{k-1}t^{k-2}
\notag\\
&=
\frac{1}{t}
\sum_{k=1}^n[S_0^{-1}(1)]_{i,k}
(k-1)\gamma_j^{k-1}
\notag
\end{align}
and $S_0^{-1}S_0^{\prime}$ $=$ $O({t^{-1}}).$
Now,
\begin{align}
S^{-1}S_0^{\prime}
&=(I+\cA+\gD)^{-1}S_0^{-1}S_0^{\prime}
\notag\\
&=(I-(\cA+\gD)(I+\cA+\gD)^{-1})S_0^{-1}S_0^{\prime}
\notag\\
&=S_0^{-1}S_0^{\prime}
-
(\cA+\gD)(I+\cA+\gD)^{-1}S_0^{-1}S_0^{\prime}.
\notag
\end{align}
It is now clear that
$S^{-1}S^{\prime}-S^{-1}_0S_0^{\prime}= O(\frac{1}{t^2}).$
Therefore,
\begin{align}
&S^{-1}AS-S^{-1}S^{\prime}=\notag\\
&\gL_0
\\
+&\cD_1
-[\cA,\gL_0]
\\
+
&\cD_2 +[\cD_1,\cA] +
\cA[\cA,\gL_0] -S_0^{-1}S_0^{\prime} -[\gD,\gL_0]
\\
+&O(t^{-2}).\notag
\end{align}

Since the roots $\gamma_j$ are distinct we may define the elements
of $\cA$ uniquely by setting its diagonal elements $\gd_{j,j}$ to
zero and by setting
$$[\cA,\gL_0]=\cD_1 \,\,\text{\rm off\,\,the\,\,diagonal.}$$
We then set
$$[\gD,\gL_0]=\cD_2+[\cD_1,\cA]+\cA[\cA,\gL_0]+S_0^{-1}S_0^{\prime}
 \,\,\text{\rm off\,the\,diagonal}
$$
and set the diagonal elements $\ga_{j,j}$ to zero
to define the matrix $\gD$. We now set $\forall j$
\begin{equation}\gb_{\gm,j}\df
%\frac{\gb_{j}}{\gm}\df[\cD_1-[\cA,\Lambda_0]]_{j,j}=
[\cD_1]_{j,j}\label{beta}\end{equation}
and
$$\frac{\rho_{\gm,j}}{t}\df
[\cD_2 +[\cD_1,\cA]+\cA[\cA,\gL_0]-S_0^{-1}S_0^{\prime}
%-[\gD,\gL_0]
]_{j,j}.$$ We note that the $\gb_{j}$'s are complex constants
depending only on $P_n$ and $P_{n-1}$, specifically on the coefficients
$d_{n-1,k}$ and the roots $\gamma_k.$ Clearly, $\gb_{\gm,j}$
$\rightarrow 0$ as $\gm$ $\rightarrow \infty$ and, moreover,
$\rho_{\gm,j} \rightarrow$ $\rho_j$ as $\gm$ $\rightarrow$ $\infty$
$\forall j$
where the limits $\rho_j$ depend only on $P_n$

We now have
$B=\Lambda + \cR$
for
$$
\gL
\df
\left( \begin{matrix}
\gamma_1t+\gb_{\gm,1}+\frac{\rho_{\gm,1}}{t}&0&\hdots&0\\
0&\gamma_2t+\gb_{\gm,2}+\frac{\rho_{\gm,2}}{t}
&\hdots&0\\
\vdots&\vdots&\ddots&\vdots\\
0&0&\hdots&\gamma_nt+\gb_{\gm,n}+\frac{\rho_{\gm,n}}{t}
\end{matrix}
\right)
$$
where the matrix $\cR$ satisfies
$\cR=O({t^{-2}}).$

We now define
\begin{equation}\label{gTh}
\Phi_j(t,\gm) \df \gamma_j\frac{t^2}{2} +\gb_{\gm,j}t+\rho_{\gm,j}\ln|t|.
\end{equation}
Since the coefficients $\gb_{\gm,j}$ and $\rho_{\gm,j}$ depend
only on $\gm$ and have definite limits as $\gm$ $\rightarrow$ $+\infty,$
we may rearrange the rows of $B$, if
necessary, to suppose that $\exists \gm_0,t_1$ $\geq 0$ so that
Re$\Phi_j(t,\gm)-$Re$\Phi_{j+1}(t,\gm)$ $\geq0$ for $t>t_1$ and $\gm>\gm_0$.

For $v$ as above, we define $w=e^{-\Phi_1}v:$ So, $w$ satisfies
\begin{align}
w^{\prime}
&=
-\Phi^{\prime}_1e^{-\Phi_1}v +
e^{-\Phi_1}v^{\prime}
\notag\\
&=
-\Phi^{\prime}_1e^{-\Phi_1}v +
e^{-\Phi_1}(\Lambda+\cR)
\notag
\\
&=(-\Phi^{\prime}_1I+\Lambda)w+\cR w
\label{w}
\\
&=
\tilde{\Lambda}w +\cR w
\notag
\end{align}
where $\tilde{\Lambda}$ is a diagonal matrix
with elements
\begin{equation}\label{exponsA}[\tilde{\Lambda}]_{j,j}=
-(\gamma_1-\gamma_j)t -(\gb_{\gm,1}-\gb_{\gm,j})
-(\rho_{\gm,1}-\rho_{\gm,j})\frac{1}{t}. \end{equation}

We now determine estimates for $w$ for sufficiently
large positive $t$, estimates for negative $t$ will be similar.
Choose $t>t_1$ so large that the estimates of $A$ hold and that
\begin{equation}\label{exponsB}{\text{\rm Re}}(\Phi_1(t,\gm)-\Phi_j(t,\gm))\geq 0\end{equation}
$\forall t\geq 0$ and $\forall \gm$
$\geq \gm_0$ for
$j>1.$

We choose $w(t;y)$ to be the unique solution such that
$w(y;y)$ $=$ $(0,\hdots,0,1)^{\dagger}$
for $y\geq x_1.$
It follows as in \cite{c} that
$$\dert|w(t,y)|^2\geq -2|\cR(t)||w(t,y)|^2$$
for $t\geq t_1$ so that
\begin{equation}
|w(t;y)|\lesssim  \exp(\int_t^y2|\cR(s)|\,ds)
\cdot|w(y;y)|
\lesssim 1.
\label{boot1}
\end{equation}
uniformly for $t,y\geq x_1$ and for
$\gm\geq \gm_0$.
Further, we then bootstrap as we apply (\ref{boot1})
to obtain
more accurate estimates for $w_j(t,y).$
For $j<n$
\begin{equation}
w^{\prime}_j(t;y)=(\Phi_j^{\prime}(t,u)-\Phi_1^{\prime}(t,u))
w_j(t;y) +O(|\cR(t)||w(t;y)|)
\label{boot2}
\end{equation}
so that
$$|w_j(t)|\lesssim \int_t^y
|\exp(\Phi_1(t,\gm)-\Phi_j(t,\gm)-(
\Phi_1(s,\gm)-\Phi_j(s,\gm)))|
|\cR(s)|ds
$$
Since the assignment
$t\rightarrow
\Re(\Phi_1(t,\gm)-\Phi_j(t,\gm))$
is an increasing function for $\gm>0,$
we have
\begin{equation}
|w_j(t,\gm)|
\lesssim\int_t^y|\cR(s)|\,ds
\lesssim  \frac{1}{t}
\notag
\end{equation}
uniformly in $t$ and $\gm$.
Also, from (\ref{boot1}) and (\ref{boot2}) we have
$$|w_n^{\prime}(t;y)|
\lesssim |\cR(t)||w(t;y)|
$$
so that
$$
|w_n(t,y)-1|\lesssim
\int_t^y |\cR(s)|\,ds
\lesssim \frac{1}{t}
$$

We now address the convergence of the function $w(\cdot,y),$ as
$y$ $\rightarrow$ $\infty.$ For $x_1\leq$ $t\leq$ $y_1\leq$ $y_2$
the function $w(t;y_1)-w(t;y_2)$ is a solution of equation (\ref{w}); and,
hence, (\ref{boot1}) and (\ref{boot2}) hold for $w(t;y)$ replaced
by $w(t;y_1)-w(t;y_2)$ so that
\begin{align}
|w(t;y_1)-w(t;y_2)| \lesssim& \int_{t}^{y_1}|\cR(s)|\,ds
\cdot|w(y_1;y_1)-w(y_1;y_2)|
\notag\\
\lesssim &
|w_n(y_1;y_2)-1|+\sum_{j=1}^{n-1}|w_j(y_1,y_2)|
\lesssim
\frac{1}{y_1}
\notag
\end{align}
uniformly in $t,$ $y_1$ and $y_2$. So, $w(t;y)$ converges uniformly
on compact subsets of $[t_1,\infty)$ as $y$ $\rightarrow$ $\infty$
to a function $w(t)$ which satisfies
$$|w_n(t)-1|+\sum_{j=1}^{n-1}|w_j(t)|\lesssim\frac{1}{t}$$
and satisfies
$\lim_{t\rightarrow \infty}w(t)$ $=$
$(0,\hdots,0,1)^{\dagger}.$
We have shown that for $v=e^{\Phi_1}w,$
\begin{equation}\label{v}
|v(t)|\lesssim |e^{\Phi_1}|;\,\,
|v_n(t)-e^{\Phi_1}|\lesssim  |e^{\Phi_1}|t^{-1};\,\,
|v_j(t)|\lesssim  |e^{\Phi_1}|t^{-1}
\,\,\text{\rm for}\ j<n:
\end{equation}
These estimates hold for all $t\geq t_0$ and $\gm \geq \gm_0$.

From $u=Sv$ we obtain a solution
to $\cL_{\gm}f=0$
from the component
$\psi_1(t,\gm)$ $\df$ $u_1$ with
$u_j=\dertn{j-1}\psi_1$
for $1\leq$ $j\leq$ $n.$
Moreover,
\begin{align}
u=&Sv
\notag
\\
=&S_0(I+\cA + \gD)v
\notag\\
=&
e^{\Phi_1}\cdot[
S_0\left(\begin{matrix}
0\\\vdots\\0\\1\end{matrix}\right)
+O(\frac{1}{t})].
\notag
\end{align}
We therefore have $\forall j\leq n-1$ that
$$\dertn{j}\psi_1(x,\gm) =(\gamma_1t)^je^{\Phi_1(t,\gm)}(1+o(1))$$
as $t\rightarrow$ $\infty$ where constants implicit in the estimates
hold uniformly for $\gm$ $\geq$ $\gm_0$. Moreover, it follows from
the construction that $\psi_1$ is of class $\cC^{\infty}(\BR)$ as a
function of $t$ for $t\geq t_0$ and holomorphic as a function of
$\gm$ for $\gm$ $\geq\gm_0.$ Then, $\psi_1$ extends to a function
$\cC^{\infty}(\BR)$ in $t$ and holomorphic in $\gm.$

The above procedure may be carried out inductively and does not involve much alteration of
the cited work. Hence, we defer the remainder
of the proof of the following result to the Appendix:
\begin{prop}\label{moremainests}
There are bases $\{\psi_k^{\pm}(t,\gm)\}_{k=1}^n$ of $\text{\rm
ker}\cL_{\gm}$ of functions, $\cC^{\infty}(\BR)$ as functions of $t$
and holomorphic as functions of $\gm$ on Re $\gm>0,$ which for each $1\leq$ $k\leq
n$ and $0\leq j$ satisfy
\begin{equation}
\dertn{j}\psi^{\pm}_k(t,\gm)\lesssim(1+|t|)^j
e^{\Phi_k^{\pm}(t,\gm)}\label{mainest}
\end{equation}
for $\pm t>0$ (resp.) for $\Phi_k^{\pm}$'s as in
(\ref{gTh}).
\end{prop}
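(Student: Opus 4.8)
The plan is to promote the single-root construction just carried out for $\psi_1$ (and $\cL_\gm = \cL_\gm^+$ on $t>0$) to all $n$ roots and both signs, by iterating the same diagonalization-plus-bootstrap argument. For a fixed index $k$, I would reorder the rows of the diagonal matrix $B = \gL + \cR$ so that $\mathrm{Re}\,\Phi_k$ becomes the dominant exponent for large $t$ (this is possible by the same remark used above: the $\gb_{\gm,j}$ and $\rho_{\gm,j}$ have definite limits as $\gm\to\infty$, so the ordering of $\mathrm{Re}\,\Phi_j(t,\gm)$ stabilizes for $t$ and $\gm$ large). Then, exactly as in \eqref{w}, set $w = e^{-\Phi_k}v$, so that $w' = \tilde\Lambda w + \cR w$ with $[\tilde\Lambda]_{j,j} = -(\gamma_k-\gamma_j)t - (\gb_{\gm,k}-\gb_{\gm,j}) - (\rho_{\gm,k}-\rho_{\gm,j})/t$ and $\mathrm{Re}(\Phi_k-\Phi_j)\ge 0$ for $t>t_1$, $\gm>\gm_0$. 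Choosing $w(y;y) = (0,\dots,0,1)^\dagger$ (the slot corresponding to the now-dominant $k$th exponent) and running the energy inequality $\tfrac{d}{dt}|w|^2 \ge -2|\cR||w|^2$ gives $|w(t;y)|\lesssim 1$, and the bootstrap via the componentwise equation yields $|w_n(t)-1| + \sum_{j\neq}|w_j(t)|\lesssim 1/t$ uniformly in $\gm\ge\gm_0$. The convergence argument as $y\to\infty$ is verbatim the one above, producing $w(t)$ with $\lim_{t\to\infty} w(t) = (0,\dots,0,1)^\dagger$.

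Next I would transfer these back through $u = Sv = S_0(I+\cA+\gD)v$ to define $\psi_k^+(t,\gm) \df u_1$ with $u_j = \partial_t^{j-1}\psi_k^+$. Since $\cA,\gD = O(1/t)$ and the dominant column of $S_0$ contributes the Vandermonde entries $(\gamma_k t)^{j-1}$, one gets $\partial_t^{\,j}\psi_k^+(t,\gm) = (\gamma_k t)^j e^{\Phi_k^+(t,\gm)}(1+o(1))$, hence in particular the bound \eqref{mainest} for $\pm = +$, $t>0$: for each fixed $j$ we absorb the $(\gamma_k t)^j$ and the $o(1)$ into the constant implicit in $\lesssim$ and the factor $(1+|t|)^j$, uniformly for $\gm\ge\gm_0$. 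The derivative bounds for $j\ge n$ are then obtained by differentiating the relation $\cL_\gm\psi_k^+ = 0$ to express $\partial_t^{\,j}\psi_k^+$ for $j\ge n$ as a combination of lower derivatives with coefficients that are polynomial in $t$ and bounded in $\gm$ (using the explicit form of $Q_j(t,\gm)$ and that $d_{n,n}\neq 0$ by genericity, so the top coefficient is invertible), which only inflates the power of $(1+|t|)$. Smoothness in $t$ on all of $\BR$ and holomorphy in $\gm$ on $\mathrm{Re}\,\gm>0$ follow because $w(t;y)$ depends holomorphically on $\gm$ (the coefficients of the ODE are rational in $\gm$ with poles only at $\gm=0$) and the limit is locally uniform, so $\psi_k^+$ inherits these properties; it then extends smoothly past $t_0$ by ODE regularity since the coefficients of $\cL_\gm$ are smooth on $\BR$.

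For the negative half-line I would run the mirror-image argument with $\cL_\gm^-$, i.e. the realization $Y\to +i\gm t$, equivalently replace $t$ by $-t$ throughout; the matrix $A$ changes only in the signs of odd powers of $t$, the roots $\gamma_j$ are unchanged, and the same diagonalization goes through with $\Phi_k^-$ in place of $\Phi_k^+$, giving \eqref{mainest} for $\pm t>0$ with the $-$ sign. Finally, the $n$ functions $\{\psi_k^\pm\}_{k=1}^n$ are linearly independent: their leading asymptotics $(\gamma_k t)^{j-1}e^{\Phi_k^\pm}$ have pairwise distinct dominant exponentials (the $\gamma_k$ are distinct), so the Wronskian is nonzero for large $|t|$, hence nonzero everywhere by Abel's identity; thus they form a basis of $\mathrm{ker}\,\cL_\gm^\pm$. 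The main obstacle is purely bookkeeping: keeping the uniformity in $\gm$ under control through the repeated reorderings and the $\cA,\gD$ corrections, and verifying that the construction really is independent of which root is singled out — but since none of this differs in substance from the $k=1$ case above or from \cite{c}, I would state it and relegate the details to the Appendix as the authors do.
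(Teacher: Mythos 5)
The central step in your proposal -- ``for a fixed index $k$, reorder the rows so that $\mathrm{Re}\,\Phi_k$ becomes the dominant exponent'' -- does not make sense and cannot be salvaged. Reordering the $\gamma_j$'s is only a relabeling; it does not change which of the exponents $\Phi_j(t,\gm)$ actually dominates as $t\to\infty$. After conjugating by $e^{-\Phi_k}$, the diagonal entries $[\tilde\Lambda]_{j,j}=(\Phi_j-\Phi_k)'$ necessarily split into those with eventually nonnegative real part (the $j$'s whose exponent beats $\Phi_k$) and those with eventually negative real part (the $j$'s whose exponent is beaten by $\Phi_k$). For an intermediate $k$ both groups are nonempty, so the required inequality ``$\mathrm{Re}(\Phi_k-\Phi_j)\ge 0$ for all $j$'' is simply false. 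Consequently the energy bound $\tfrac{d}{dt}|w|^2\ge -2|\cR||w|^2$ fails: the modes with $\mathrm{Re}\Phi_j<\mathrm{Re}\Phi_k$ blow up as the backward-shooting endpoint $y\to\infty$, so the limit $w(t;y)\to w(t)$ does not exist and the bootstrap collapses. This is exactly the classical difficulty of constructing the \emph{intermediate} asymptotic solutions of a linear system: the shooting-from-infinity device picks out only one extreme solution, and a naive relabeling cannot isolate the others.

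The paper's appendix gets around this by a genuinely different device -- reduction of order following \cite{c}. Having constructed $\psi_1$ (equivalently the vector solution $v$ with $v\to e_n\,e^{\Phi_1}$), one forms the auxiliary $(n-1)\times(n-1)$ system
\begin{equation}
\tilde{v}^{\prime} = \tilde{\Lambda}\tilde{v} + (\tilde{\cR}+\cC)\tilde{v},
\notag
\end{equation}
obtained by deleting the last row and column and adding a coupling correction $[\cC]_{i,j}=\frac{v_i}{v_n}[B]_{n,j}$ built from the already-known solution; a second genuine solution is then
\begin{equation}
w = g\, v + (\tilde v_1,\dots,\tilde v_{n-1},0)^{\dagger},
\qquad
g^{\prime} = \frac{1}{v_n}\sum_{j=1}^{n-1}[B]_{n,j}\tilde{v}_j,
\notag
\end{equation}
and the estimates (\ref{vtil1}) for $\tilde v$, together with $|g(t)|\lesssim |t^{-1}e^{\Phi_2-\Phi_1}|$, yield $\psi_2$ with the asymptotic $\partial_t^j\psi_2 = (\gamma_2 t + \gb_{\gm,n-1})^{j-1}e^{\Phi_2}(1+o(1))$. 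Iterating this peeling reduces the order by one at each stage, so that at each step the new ``top'' exponent is again extreme in the reduced system, and the shooting argument applies. Your observation that the negative half-line case follows from the substitution $s=-t$, and that derivative bounds for $j\ge n$ come from differentiating $\cL_\gm\psi=0$, are consistent with the paper; but the inductive passage from $\psi_1$ to the remaining basis elements requires the reduction-of-order machinery, not a relabeling, and that is the missing idea here.
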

For ease of reference, we end this section with the following remark which follows
by inspection from equations (\ref{D1}) and (\ref{beta}):
\begin{remark}\label{remark1}
The transformation $P(i\partdert,t)$ $\rightarrow$
$P(i\partdert,-t)$ leads to the transformations $\gamma_j$
$\rightarrow$ $-\gamma_j$ and $d_{n-1,k}$ $\rightarrow$
$(-1)^{n-k}d_{n-1,k}.$
\end{remark}

\section{Further estimates: Analytic extensions, Wronskians and Adjoints}\label{s3}
We start with the more novel techniques beyond those
of \cite{w1,w2,w3}. In those articles, as in this one,
singularities in the parametrices are a concern as the Fourier parameter $\eta$ tends to $0$.
However, in the previous articles the singularities
were canceled by solving
$P_n(i\dert,t)f=g$ for functions $g$ with zeros of appropriate order so as to
cancel such singularities in $f.$
Here, we cannot avoid the singularity of $\cL_{\gm}$ at $\gm=0$ by such procedures; but, we can bypass it
as we pass the parameter $\gm$ to the complex plain.

We now form estimates of bases of ker$\cL_{\gm}$ but now with
complex-valued $t,$ $\gm$. We note that the solutions can
be analytically continued \cite{cl} and, as we shall show, the estimates as in
Proposition \ref{moremainests} hold if (\ref{exponsB}) holds, perhaps for some
reordering of the characteristic roots $\gamma_j$. Such
estimates turn out to hold for $t$ contained in certain complex sets of
the form $\cK+S$ where $\cK$ is compact and $S$ is conic.
\begin{prop}\label{complex}
Let $\gO$ be a simply connected, compact subset of $\BC$ which
is a positive distance from the origin. For any given $\ga$
$\in$ $[0,\pi/2]$ there is a compact set $\cK$ $\subset$ $\BC$ where the operator ker$\cL_{\gm}$ has bases $\{\psi_j^{\pm}(t,\gm)\}_{j=1}^n$
for which estimates as in (\ref{mainest}) hold for $\gm\in\gO$ and $t$ on a
complex set of the form
\begin{equation}\label{sector} \fU_{\ga}^{\pm}=
\{z_0+se^{-i\theta}| \pm s>0,\gu_1<\gth-\ga
<\gu_2,z_0\in\cK\}\,\,\text{(resp.).}
\end{equation}

\end{prop}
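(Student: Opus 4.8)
\textbf{Proof proposal for Proposition \ref{complex}.}
The plan is to reduce the complex-parameter estimates to the real-variable diagonalization procedure of Section \ref{s2} by analytic continuation, exactly as \cite{cl} permits for linear ODEs with holomorphic coefficients. First I would repeat the rearrangement $\cL_\gm = \sum_j t^{n-j}Q_j(t,\gm)\partdertn{j}$ and the passage to the first-order system $u' = Au$, observing that every step --- the formation of $A_0$, $S_0$, $\Lambda_0$, the error matrices $\cE_i$, and the diagonalizing conjugation $S = S_0(I+\cA+\gD)$ --- is algebraic in $t$, $1/t$ and $1/\gm$, hence extends verbatim to $t\in\BC\setminus\{0\}$ and $\gm$ in the compact set $\gO$ (which is bounded away from $0$, so the $1/(\gm t)^l$ terms are controlled). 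The outcome is again $B = S^{-1}AS - S^{-1}S' = \Lambda + \cR$ with $\cR = O(t^{-2})$ and $\Lambda$ diagonal with entries $\gamma_j t + \gb_{\gm,j} + \rho_{\gm,j}/t$, where now $O(t^{-2})$ means majorization uniformly for $\gm\in\gO$ and for $t$ in a set of the form $\cK + S$ with $\cK$ compact and $S$ conic; here $\cK$ must be chosen large enough (depending on $\gO$ and on the $\gb,\rho$ bounds) that $I+\cA+\gD$ and $I+\cR$-type corrections remain invertible and the VanderMonde inverse estimates $[S_0^{-1}(t)]_{i,j} = t^{1-j}[S_0^{-1}(1)]_{i,j}$ stay bounded.

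Next I would handle the exponential weights. Set $\Phi_j(t,\gm) = \gamma_j t^2/2 + \gb_{\gm,j} t + \rho_{\gm,j}\ln t$ with a fixed branch of $\log$ on the slit plane containing $\fU_\ga^\pm$. The key observation is that along a ray $t = z_0 + s e^{-i\theta}$ one has $\Re(\Phi_j - \Phi_k) = \tfrac12\Re((\gamma_j-\gamma_k)e^{-2i\theta})s^2 + O(s)$, so the ordering $\Re(\Phi_1 - \Phi_k)\geq 0$ for $k>1$ --- which is exactly hypothesis (\ref{exponsB}) --- holds for $s$ large precisely when $\Re((\gamma_1-\gamma_k)e^{-2i\theta}) \geq 0$, i.e. when $\theta$ lies in a suitable angular window; this window, intersected over $k$ and shifted by $\ga$, is what produces the admissible opening $\gu_1 < \gth - \ga < \gu_2$ in the definition of $\fU_\ga^\pm$, after reordering the roots $\gamma_j$ as needed. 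One then substitutes $w = e^{-\Phi_1}v$, obtaining $w' = \tilde\Lambda w + \cR w$ with $\tilde\Lambda$ diagonal with entries $-(\gamma_1-\gamma_j)t - (\gb_{\gm,1}-\gb_{\gm,j}) - (\rho_{\gm,1}-\rho_{\gm,j})/t$ as in (\ref{exponsA}).

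The Gr\"onwall/bootstrap argument of Section \ref{s2} now carries over: integrating $w' = \tilde\Lambda w + \cR w$ \emph{along the ray} from a large base point $t$ out to $y$ (staying inside $\fU_\ga^\pm$, which is possible since the set is conic), the monotonicity of $s\mapsto \Re(\Phi_1 - \Phi_j)(z_0 + s e^{-i\theta})$ for large $s$ replaces the real-variable monotonicity used in (\ref{boot1})--(\ref{boot2}), giving $|w(t;y)|\lesssim 1$, then $|w_j(t;y)|\lesssim 1/|t|$ for $j<n$ and $|w_n(t;y) - 1|\lesssim 1/|t|$, then convergence as $y\to\infty$ along the ray to a solution $w(t)$ with $\lim w = (0,\dots,0,1)^\dagger$. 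Undoing $u = Sv = S_0(I+\cA+\gD)e^{\Phi_1}w$ recovers $\psi_1^\pm$ with $\partdertn{j}\psi_1^\pm \lesssim (1+|t|)^j e^{\Phi_1^\pm}$ on $\fU_\ga^\pm$, and the inductive construction of $\psi_2^\pm,\dots,\psi_n^\pm$ proceeds as in Proposition \ref{moremainests} after the corresponding reordering. I expect the main obstacle to be bookkeeping the geometry: showing that the angular constraint forcing (\ref{exponsB}) simultaneously for all pairs $(1,k)$ is nonempty and, after the root reordering is allowed to depend on the subregion, that the resulting sectors $\fU_\ga^\pm$ genuinely cover a set of the advertised form $\cK + S$ uniformly in $\gm\in\gO$ --- this is where the hypothesis $\ga\in[0,\pi/2]$ and the distinctness of the $\gamma_j$ (hence of the directions $\arg(\gamma_j-\gamma_k)$) are used, and where care is needed so that the compact set $\cK$ absorbing $z_0$ does not have to grow without bound.
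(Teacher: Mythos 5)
Your plan follows the paper's approach for most of the way --- carrying the diagonalization $B=\Lambda+\cR$, the substitution $w=e^{-\Phi_1}v$, and the ordering condition (\ref{exponsB}) over to rays $t=z_0+se^{-i\theta}$, and recovering $\psi_1^\pm$ by induction. But there is a genuine gap at the central analytic step, and it is precisely the step the paper does not omit: the Phragm\'en--Lindel\"of argument.

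Your ray-by-ray Gr\"onwall/bootstrap gives, for each fixed direction $\gth$, a bound $|w(z_0+se^{-i\gth})|\leq C_\gth$ with a constant that \emph{a priori} depends on $\gth$, and nothing in your argument rules out $C_\gth\to\infty$ as $\gth$ approaches the edges of the admissible window (where $\Re\big((\gamma_1-\gamma_k)e^{-2i\theta}\big)$ degenerates to $0$ for some $k$). Worse, the object you call ``convergence as $y\to\infty$ along the ray to a solution $w(t)$'' is direction-dependent in principle: the solution singled out by the condition $\lim w=(0,\dots,0,1)^\dagger$ can change as the ray crosses an anti-Stokes line, so integrating from $\infty$ along different rays need not produce the same analytic function, and you have not argued that it does. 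The paper's proof instead fixes one analytic solution (the continuation of the real-variable $w$), establishes the ray-wise bound $|w(z_0+se^{-i\gth})|\leq C_\gth$ for $\gth$ in an open interval, and then invokes the Phragm\'en--Lindel\"of theorem (in the ODE form of \cite{cl}, Section 5.5) to upgrade these to a \emph{uniform} bound on a possibly slightly smaller sector $\gu_1<\gth<\gu_2$. This is what simultaneously resolves the uniformity in $\gth$ and the Stokes ambiguity, and it is the one ingredient your proposal is missing. (Your treatment of $\ga\neq 0$ by intersecting angular windows is workable, but the paper's cleaner route is the rotation $\zeta=te^{-i\ga}$ with rotated roots $\tilde\gamma_j=\gamma_je^{2i\ga}$, reducing to the $\ga=0$ case just established.)
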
 We note that the choice of $\cK$ may
require that Re $z_0e^{-i\ga}$ and Im $z_0e^{-i\ga}$ are sufficiently large and positive; and, our choices
of $\gu_1,\gu_2$ may depend only on $\ga.$
Furthermore, we may adjust the domains
$\cU^{\pm}_{\ga}$ to obtain $\cK,\gO,\gu_1$ and $\gu_2$ common to
each $\ga$ and $\pm$ sign.
\begin{proof} We will first prove the case for $\ga=0$. Let us
(re)arrange the characteristic roots so that the following hold
for each pair of indices $(j,l):$ $1\leq j<l\leq n $:
\begin{itemize}
\item[1)]
Re$(\gamma_j-\gamma_l)\geq 0,$
\item[2)] Re$(\gamma_j-\gamma_l)=0$ $\implies$ Im$(\gamma_j-\gamma_l)>0$
\end{itemize}
Then set $\gD_{j,l}\df\gamma_j-\gamma_l,$
$\gb_{j,l}$ $\df$ $\gb_j-\gb_l,$ and $\rho_{j,l}$
$\df\rho_j-\rho_l$ defined as in Proposition \ref{mainest}, but
with $\gamma_j$'s in the present arrangement.

We now set
$$\Phi_{j,l}(z,\gm)\df\gD_{j,l}z^2/2+\gb_{j,l}z/\gm+\rho_{j,l}/z$$
and compute ${\text{\rm Re}}\Phi_{j,l}(z_0+se^{-i\gth},\gm)=$
\begin{align}
&\frac{s^2}{2}(
{\text{Re}}\gD_{j,l}\cos(2\gth)
+{\text{Im}}\gD_{j,l}\sin(2\gth))\notag\\
&+s(\cos\gth{\text{Re}}(z_0\gD_{j,l}+\frac{\gb_{j,l}}{\gm})
+\sin\gth{\text{Im}}(z_0\gD_{j,l}+\frac{\gb_{j,l}}{\gm}))\notag\\
&+{\text{Re}}(z_0^2\gD_{j,l}+z_0\gb_{j,l}/\gm)
+{\text{Re}}(\rho_{j,l}/(z_0+se^{-i\gth}))\notag
\end{align}

We estimate $w(z)$ defined as in Proposition
\ref{mainest} but extended to complex domains:
There are positive $\gth_1,\gth_2,s_0$
so that
$$-\Phi_{j,l}(z_0+se^{-i\theta})+\Phi_{j,l}(z_0+s_0e^{-i\theta})<-c_2s+c_3$$ for some positive
constants $C,$ $c_1,c_2$ uniformly for $-\gth_1<$
$\gth<\gth_2$ and $s\geq s_0.$
Likewise to (\ref{w}) we find that the function
$w$
satisfies
$$|w(z_0+se^{-i\gth})|\leq C_{\gth}|w(z_0+se^{-i\gth})|$$
for some constant $C_{\gth}$ (depending on $\gth$) for each such $\gth.$
It now follows from an application
of the Phragmen-Lindel\"{o}f Theorem (cf.
Section 5.5
\cite{cl}) that, for a possibly smaller region
$-\gth_1\leq$ $\gu_1<\gth$ $<\gu_2$ $\leq$ $\gth_2$,
$w(z)$ is bounded.
The various induction arguments as in Proposition \ref{moremainests} then follow
to complete the proof in this case.

For $\ga\neq 0$ we set $\zeta=te^{-i\ga}$ and apply the above
arguments to
$$e^{ni\ga}P(ie^{-i\ga}\partial_{\zeta},\zeta e^{i\ga})
=\sum_{l=0}^n\left(\frac{e^{i\ga}}{\gm}\right)^{n-l}P_l(i\partial_{\zeta},\zeta e^{2i\cdot\ga}).
$$
We replace the characteristic roots $\gamma_j$ by
$\tilde{\gamma_j}$$=\gamma_je^{2i\ga}$ $\forall j$ and
rearrange them so that Re$(\tilde{\gamma_j}$
$-\tilde{\gamma_l}),$ Im$(\tilde{\gamma_j}$
$-\tilde{\gamma_l})$ $\geq 0$ again for indices $1\leq j<l\leq
n$.
\end{proof}

Using the variation of constants formula to form solutions to
$\cL_{\gm} f=g$ \cite{cl}, we need to analyze certain Wronskians and related determinants:
Given a basis $\vec{\psi}$ of ker$\cL_{\gm}$ we set
$W(\vec{\phi})(t,\gm)$ $=$ $\det A$ with the $n\times n$ matrix $A$ given by $[A]_{k,j}=\dertn{k-1}\phi_j$
and $W_l(t,\gm)$ defined likewise but with the $l$-th column of $A$ replaced by
$(0,\hdots,0,1)^{\dagger}.$ We will apply superscript $\pm$ to the $W_l$'s and to $W$ to indicate
their corresponding basis pairs $\vec{\psi^{\pm}};$ or, we may
simply drop the superscript when the basis is clearly implied.
\begin{prop}\label{thehs}
Suppose $\vec{\psi}^{\pm}$ is a basis of $\cL_{\gm}$ as in
Proposition \ref{moremainests} defined for $(t,\gm)$ in
$\BR^{\pm}\times(\gm_0,\infty).$ Then, for some real constant $a,$
the functions $h_j^{\pm}\df$ $W_j^{\pm}/W^{\pm}$ satisfy
$$
\dertn{k}h_j^{\pm}(t,\gm)\lesssim e^{-{\text \rm Re}(\frac{\gamma_jt^2}{2}+
\frac{\gb_jt}{\gm})}(1+|t|)^{a+k}\,\,\,{\text\rm (resp.)}.$$
Moreover, such estimates likewise hold for bases as in Proposition \ref{complex}
on their associated domains (\ref{sector}).
\end{prop}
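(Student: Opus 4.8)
The plan is to express $h_j = W_j/W$ via Cramer's rule and then insert the asymptotic estimates from Propositions \ref{moremainests} and \ref{complex}. First I would recall that $W = W(\vec{\psi}\,)$ is the Wronskian of a basis of solutions of the monic operator $\cL_{\gm}$ (leading coefficient $(i\partial_t)^n$ up to the factor $(-i\gm)^{-n}$, which is harmless and $t$-independent). By Abel's formula, $W(t,\gm)$ equals a constant times $\exp\left(-\int^t c_{n-1}(s,\gm)\,ds\right)$ where $c_{n-1}$ is the coefficient of $(i\partial_t)^{n-1}$ in $\cL_{\gm}$; from Section \ref{s2} this coefficient is $t^{\,}Q_{n-1}(t,\gm) = d_{n-1,n-1}t + d_{n-1}/\gm + O(1/t)$, so integrating gives $\ln|W| = -\,{\rm Re}\!\left(\tfrac{d_{n-1,n-1}}{2}t^2 + \tfrac{d_{n-1}}{\gm}t\right) + O(\ln|t|)$. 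Since $d_{n-1,n-1} = -i\sum_j\gamma_j = \sum_j(-i\gamma_j)$ — wait, more precisely $d_{n,n-1} = -i\sum_j\gamma_j$, and by the diagonalization the exponents $\Phi_k$ were built so that $\sum_k \Phi_k = \tfrac{1}{2}\bigl(\sum_k\gamma_k\bigr)t^2 + \bigl(\tfrac{1}{\gm}\sum_k\gb_{\gm,k}\bigr)t + (\text{log})$, matching $-\ln|W|$ up to $O(\ln|t|)$. Equivalently, and more cleanly, $W \asymp \prod_{k=1}^n e^{\Phi_k}$ up to polynomial factors: the leading behavior $\dertn{j}\psi_k = (\gamma_k t)^j e^{\Phi_k}(1+o(1))$ makes $A$ asymptotically $S_0 \cdot \mathrm{diag}(e^{\Phi_k})$ up to lower-order corrections, and $\det S_0 = t^{n(n-1)/2}\det S_0(1)$ is a nonzero VanderMonde times a power of $t$. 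So $|W| \asymp (1+|t|)^{n(n-1)/2}\prod_k |e^{\Phi_k}|$, and one checks the $o(1)$ corrections do not destroy this since $S_0$ is uniformly invertible for $t$ bounded away from $0$.

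Next I would handle $W_j$. By definition $W_j$ is $\det$ of the matrix $A$ with its $j$-th column replaced by $(0,\dots,0,1)^{\dagger}$; expanding along that column, $W_j = \pm M_{nj}$ where $M_{nj}$ is the $(n,j)$-minor of $A$, i.e. the determinant of the $(n-1)\times(n-1)$ Wronskian-type matrix built from the $n-1$ basis functions $\{\psi_k : k\neq j\}$ using only derivatives of orders $0,\dots,n-2$. Running the same asymptotic analysis on this reduced determinant: it is asymptotic to the $(n-1)\times(n-1)$ VanderMonde-type determinant in $\{\gamma_k t : k\neq j\}$ (with rows indexed by derivative orders $0,\dots,n-2$) times $\prod_{k\neq j} e^{\Phi_k}$, again up to polynomial factors in $(1+|t|)$. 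Hence $|W_j| \asymp (1+|t|)^{b_j}\prod_{k\neq j}|e^{\Phi_k}|$ for an explicit integer $b_j$, and therefore
$$
|h_j(t,\gm)| = \frac{|W_j|}{|W|} \lesssim (1+|t|)^{b_j - n(n-1)/2}\,\frac{1}{|e^{\Phi_j}|} = (1+|t|)^{a}\,e^{-{\rm Re}\,\Phi_j(t,\gm)}
$$
for a suitable real $a$. Since ${\rm Re}\,\Phi_j = {\rm Re}\bigl(\tfrac{\gamma_j t^2}{2} + \tfrac{\gb_j t}{\gm}\bigr) + \rho_{\gm,j}\ln|t|$ and the logarithmic term only contributes a further power of $|t|$, this absorbs into $(1+|t|)^a$ and gives exactly the claimed bound for $k=0$. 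For the $\dertn{k}$ estimates I would argue that $h_j$, being a ratio of entire-in-$\gm$, smooth-in-$t$ functions with $W$ nonvanishing on the relevant domain, satisfies a first-order linear ODE $h_j' = (\text{smooth coeff})\,h_j + (\text{smooth})$ — more simply, one differentiates the quotient formula and notes each derivative introduces at worst one extra factor from differentiating $e^{-\Phi_j}$ (contributing $\gamma_j t + \gb_j/\gm + O(1/t)$, polynomially bounded) or from differentiating the polynomial prefactors; a short induction on $k$ then yields $|\dertn{k}h_j| \lesssim (1+|t|)^{a+k}e^{-{\rm Re}\,\Phi_j}$, as claimed. One may also observe that the $h_j$ are precisely the entries appearing in the variation-of-constants kernel, and $\cL_{\gm}(\sum_j h_j(\cdot,y)\psi_j) = 0$ with prescribed jump, so the ODE they satisfy is transparent.

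Finally, for the complex domains of Proposition \ref{complex}: the determinant identities are purely algebraic and carry over verbatim with $t$ replaced by $z \in \fU_{\ga}^{\pm}$, and the asymptotic estimates $\dertn{j}\psi_k \lesssim (1+|z|)^j e^{\Phi_k(z,\gm)}$ were established there on exactly those sectors (after the reordering of the $\gamma_k$), so the same VanderMonde-cancellation argument applies; the key point, already supplied by Proposition \ref{complex}, is that on $\fU_\ga^\pm$ the ordering is such that $\det S_0$ and the reduced minors remain bounded away from zero relative to their nominal size, i.e. $z$ stays away from $0$. The main obstacle I anticipate is not any single deep step but making the cancellation in $W_j/W$ rigorous: one must verify that the $o(1)$ (resp. $O(1/t)$) corrections to the columns of $A$ do not conspire to cancel the leading VanderMonde determinant — i.e. that the leading term genuinely dominates. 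This is handled by factoring $A = S_0\,\mathrm{diag}(e^{\Phi_k})\,(I + O(1/t))$ (valid from the $u = Sv$ representation and Proposition \ref{moremainests}) and using $\det(I + O(1/t)) = 1 + O(1/t)$ together with $|\det S_0| \asymp (1+|t|)^{n(n-1)/2}$, and similarly for the reduced matrix defining $W_j$; the uniformity in $\gm$ follows since all implied constants in Section \ref{s2} were uniform for $\gm \geq \gm_0$ (resp. $\gm \in \gO$), and the limits $\gb_{\gm,j}\to 0$, $\rho_{\gm,j}\to\rho_j$ keep the exponents under control.
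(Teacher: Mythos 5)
Your proposal follows the paper's strategy for the $k=0$ bound fairly closely: Abel's formula (or the equivalent product decomposition) gives the two-sided estimate on $W$, and multilinearity/minor expansion gives the upper bound on $W_j$. The paper's own derivation of the $W$ estimate is a bit different in flavor --- it uses a compactness argument (subsequences $\gm_l\to\infty$ along which the basis converges uniformly on compacta to a basis of $\ker P_n(i\partial_t,t)$, whose Wronskian is $Ce^{\gamma t^2/2}$) combined with Abel's formula to pin down the $t/\gm$ correction via $a_n=\sum\gb_j=\mathrm{tr}\,\cE_1$. Your version via the explicit coefficient of $\dertn{n-1}$ is legitimate, though note the notational slip: the coefficient of $t$ there is $d_{n,n-1}=-i\sum_j\gamma_j$, not $d_{n-1,n-1}$; the $1/\gm$ term carries $d_{n-1,n-1}$, and you half-notice and then pivot to the product picture. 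Also, your claim that $W_j$ is \emph{asymptotic to} a nonvanishing VanderMonde minor (a two-sided bound) is more than is needed and more than the paper asserts --- only the upper bound $W_j\lesssim e^{(\gamma-\gamma_j)t^2/2+(a_n-\gb_j)t/\gm}(1+|t|)^a$ is used, and the two-sided version would require more care about uniformity of the $o(1)$ corrections in $\gm$.

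The real divergence, and the weak spot, is the treatment of the $\dertn{k}$ estimates for $k\geq 1$. The paper's route is short and structural: from \cite{w1}, the functions $\bar h_j$ form a basis of $\ker\cL_{\gm}^{*}$, and the adjoint operator has the same kind of diagonalization and basis asymptotics as $\cL_{\gm}$ itself (Proposition \ref{moremainests} applied to $\cL_{\gm}^{*}$); matching the $k=0$ asymptotics identifies $\bar h_j$ with the corresponding adjoint basis element, and the derivative estimates then come for free (including orders $\geq n$, by feeding the ODE back in as at the end of the Appendix). Your proposal of ``differentiate the quotient formula'' is not wrong in spirit, but as written it slides between differentiating an \emph{upper bound} (not meaningful) and differentiating the identity $h_j=W_j/W$ (which requires re-estimating $\dertn{a}W_j$ and $\dertn{b}W$ via (\ref{mainest}) together with the ODE for derivatives of order $\geq n$, and uses the lower bound on $|W|$ at every order); and the parenthetical ``$h_j$ satisfies a first-order linear ODE'' is not accurate --- $\bar h_j$ satisfies the $n$-th order adjoint equation, which is exactly the structure the paper exploits. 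Your closing remark about the $h_j$ being variation-of-constants kernel entries with a prescribed jump is pointing at the right mechanism, but you should actually invoke that $\bar h_j\in\ker\cL_{\gm}^{*}$ and run the Proposition \ref{moremainests} machinery on the adjoint rather than arguing by brute-force differentiation. The complex-domain case is handled correctly and for the same reason as in the paper ($\gm$ compact makes it easier).
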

\begin{proof}
We will first prove the estimates for $h_j=h_j^+,$ temporarily dropping the superscript.
In the case with domain $\BR^+\times(\gm_0,\infty)$
we note that for any sequence $\gm_l:$ $l=1,2,\hdots,$ tending to $+\infty$ there is a subsequence
(say $\gm_l$) so that each function $\dertn{k}\psi_j(\cdot,\gm_l)$ converges uniformly on compact sets of $\BR$
(ucs) to functions $\dertn{k}\zeta_j(t)$ respectively for each $0\leq k<n$ and where $\vec{\zeta}$ is a bases for ker$P_n(i\partial_t,t).$ Here $\vec{\zeta}$
satisfies
$W(\vec{\zeta})$ $=Ce^{\gamma t^2/2}$ for some constant $C$ (cf. \cite{w1,c}) where
$\gamma\df$ $\sum_{j=1}^n\gamma_j$.
Therefore, using Abel's formula along with the above estimate,
$W(\vec{\psi})(t,\gm)$ $\asymp$ $e^{\gamma t^2/2+a_n t/\gm}$
for sufficiently large $\gm_0$ and for $a_n$ as in (\ref{cE}).

$W_j(\vec{\psi})$ is a finite linear combination of $(n-1)$-fold products of the form
${\Pi}_{l\neq j}\partial_t^{\ga_l}\psi_l$
for distinct $l:$ $1\leq l\leq n$ and distinct $\ga_l:$ $0\leq \ga_l< n.$
Since $\sum\gb_j$ $=$ tr$\cD_1=$ tr$\cE_1$ $=a_n$
we find
$$W_j(\vec{\psi})(t,\gm)\lesssim e^{(\gamma -\gamma_j)t^2/2+(a_n-\gb_j)t/\gm}(1+|t|)^a$$
for some constant $a>0.$
%$h_j=A(\gm)W_j/W$
From \cite{w1} we recall that the functions ${\bar{h}_j}$ form a
basis for $\cL_{\gm}^*$ and, arguing by matching asymptotics, the
result of the proposition holds $\forall k$ in the present case.

The proof for the $h^-_j$'s follows in exactly the same way as above, using the corresponding estimates on
$\BR^-\times$ $(\gm_0,\infty)$ for some large $\gm_0>0.$
Finally,
the proof for bases defined as in Proposition \ref{complex} follows similarly, in fact more
readily since $\gm$ is restricted to compact subset of $\BC$, and we are done.
\end{proof}

\section{Parametrices}\label{s4}
We start this section with
more definitions and notation. We will order the bases functions $\psi_j^{\pm}$ $\in$ ker $\cL_{\gm}$ (resp.)
according their asymptotic growth as indicated
by the pairs $(\gamma_j,\beta_{\gm,j})$.
Define $\Phi_{j}^{\pm}:$ $j=1,\hdots,n$
as an ordering of the $\Phi_j$'s as in $(\ref{gTh}),$
so that the following holds $\forall$
$1\leq j<$ $n$:
Re$\gamma_j\geq$ Re$\gamma_{j+1};$ Re$\gamma_j=$ Re$\gamma_{j+1}$ $\implies$ Re$\beta_j\gtrless$
Re$\beta_{j+1}$ (resp.) for all sufficiently large $\gm.$
Such ordered bases will be denoted in vector form
as $\vec{\psi}$ $\df$ $(\psi_1,\psi_2,\hdots,\psi_n)^{\dagger}$
with obvious superscript convention. Finally, since we will not
always keep precise track of power-function factors in our estimates, we introduce
the notation ${\lesssim}_{pol\vec{x}}$ (${\asymp}{_{pol\vec{x}}}$)
when the estimate $\lesssim$ (resp. $\asymp$) holds modulo factors of polynomial
growth in $\vec{x}:$ that is, the implied constants $C$ are each replaced by
$C(1+|\vec{x}|)^r$ for some sufficiently large, fixed $r>0$.

We will be using various basis transformations for our kernel spaces, but
for large $\gm>0$ we restrict our bases to those of a certain class developed from
canonical bases as in (\ref{mainest}).
We will
call a collection of bases
$\{\gpv_{j}^+(t,\gm)\}_{j=1}^{n}$ $,\{\gpv_{j}^-(t,\gm)\}_{j=1}^{n}$
{\it admissible} (or an {\it admissible pair}) if they
satisfy
$\vec{\gpv}^{\pm}=U^{\pm}\vec{\psi}^{\pm}$
(resp.) with $n\times n$ matrices $U^{\pm}$ satisfying the following: $U=U(\gm)$ has $\cC^{\go}$
entries on $(\gm_0,\infty);$
$[U]_{j,k}\lesssim$ $\gm^a$ $\forall j,k$ for some fixed $a>0;$ $[U]_{j,k}=0$
for $j<k$ (upper-triangular); and, $[U]_{j,j}$ $\gtrsim$ $\gm^{-b}$ $\forall j$ for some
fixed $b>0.$
In this case, it is easy to show
$$\partial_t^k\gpv_j^{\pm}(t){\asymp}{_{pol\gm}}e^{\Phi_j^{\pm}}(1+|t|^k):$$
$t\gtrless 0$ (resp.) for all sufficiently large $\gm>0$.
We further denote by $J_{\pm}$ the least index whereby
$j\geq J_{\pm}$ implies that
either Re$\gamma_j$ $<0$ or Re$\gamma_j$ $=0$
and Re$\beta_{\gm,j}^{\pm}$ $\gtrless 0$ for $(\gamma_j,\beta_{j}^{\pm})$
(resp.).
We simply distinguish the basis functions in decreasing order according
to their exponential growth for large $t$ in their respective domains.
We note that for a given admissible pair the
associated functions $\cH^{\pm}_j\df$ $W_j(\vec{\phi}^{\pm})/W(\vec{\phi^{\pm}})$
satisfy
$$\cH_j=h_j/[U]_{j,j}\lesssim_{pol\,(t,\gm)}e^{-\gamma_jt^2/2-\gb_jt/\gm}$$
on the corresponding domains.

To characterize the global behavior of our bases we introduce definitions regarding
transition matrices associated with admissible $\vec{\phi}^{\pm}$.
Given a permutation $\gs$ of
$\{1,2,\cdots,n\}$, denote by $I_{\gs}$ the $n\times n$ matrix
with elements defined by $[I_{\gs}]_{j,k}$ $=\gd_{j,\gs(j)},$ with
$\gd$ denoting the Kr\"oniker delta function. We characterize transition
(scattering) matrices $\vec{\phi}^+$ $=A\vec{\phi}^-$ as follows: Given an $n\times n$
matrix, $A,$ the expression $A\leftrightarrow I_{\gs}$ will mean
that $A=UI_{\gs}V$ for some invertible, upper-triangular $n\times n$ matrices
$U$ and $V.$ And, with slight abuse of notation $A\leftrightarrow
B$ will mean that $A\leftrightarrow I_{\gs}$ and $B\leftrightarrow
I_{\gs}$ both hold. We note that "$\leftrightarrow$" is an
equivalence relation on $GL(n,\BC)$. Given $J^{\pm}$ as above, we will say that
the permutation $\gs$ is {\it resolving} if $\gs(j)>J^-$ $\forall j$ $\leq$
$J^+$.

We will say the operator $\cL_{\gm}$ is
{\it regular} if on a real interval $(\gm_0,\infty)$
there is some admissible pair of bases $\vec{\gpv}^{\pm}$
and smooth functions $a_{j,l}^{\pm}(\gm)$ $\lesssim$ $\gm^r$
on $(\gm_0,\infty)$ so that $\forall j\leq J^{\pm}$
\begin{equation}\gpv_j^{\pm}(t,\gm)\notag
=\sum_{l> J^{\mp}}a_{j,l}^{\mp}(\gm)\gpv^{\mp}_l(t,\gm)\,\,{\text \rm (resp.)}
\end{equation}
for some fixed $r$ $>0$.

We now proceed with steps of constructing a parametrix under the hypothesis of
Lemma \ref{lem1} where we need only to consider the parameter $\gm$ in a (perhaps large) compact complex neighborhood of $0$: More precisely, we will suppose $\gm\in$ $\{re^{i\theta/2}|0\leq \theta\leq \pi\}$ for some fixed $r>0$. Let us restrict real $x,\xi$ so that $x$ is bounded, say $|x|<M,$ and $\xi>0;$
and, fix $0\leq\ga\leq \pi$ and $\gl>0$ as we set
\begin{equation}
t(x,\xi,\gm)=x\gm+(\rho e^{i\ga/2}+\xi)/\gm;\,\,z_0=x\gm+\gl e^{i\ga/2}/\gm.\label{tz}
\end{equation}
Given $\ga$ there is a positive $\gd$ and a sufficiently
large $R$ so that $z_0$ and $t$ take values in sets $\cK$ and $\fU^{\pm}_{\ga/2}$
as in (\ref{sector}), respectively, with (say)
$\gd=$ $\gu_1=$ $\gu_2$ and, hence, the estimates of Proposition \ref{complex} hold for some basis of ker$\cL_{\gm}$ for such values of $t,\gm$ and $z_0.$

We set out to choose a finite collection of such bases as follows:
We apply the Heine-Borel theorem to choose finitely many such intervals $\cU_{l}$
$=\{\theta: |\theta-\ga_l|<\gd_l\}:$ $l=1,\hdots,N$ (say)
with small positive $\gd_l$'s to form a
refined open cover of $[0,\pi/2].$
To each $l$ we may find corresponding constants $\gl_l$'s so large that
the associated $z_0,t$ of (\ref{tz}) lie in the respective domains
of those bases in Proposition \ref{complex}.

For each given $\cU_l^{\pm}$ let us reorder the corresponding pairs
$(\gamma_{l,j},\gb_{l,j}^{\pm})$ associated with bases $\vec{\psi}^{\pm}_l$
in the fashion as the $\vec{\psi}^{\pm}$ in Proposition \ref{complex}; and,
let us likewise define the indices
$J^{\pm}_l.$
Then for bounded $g$ $\in\cC^{\infty}(\BR)$ set
$z_l\df\gl_l e^{-i\ga_l},$ $\tilde{\xi}_{z_l}\df \xi +z_l$
and regard $t=t(x,\tilde{\xi}_{z_l},\gm)$ as in (\ref{tz}). Then we define for $[\vec{\psi}_l]_j$
$\df$ $\psi_{j;l}$
\begin{equation}\label{Fl}F_l^{\pm}(t,\gm)
\df\sum_{j=1}^n\psi_{j;l}^{\pm}(t,\gm)\int_{\cC_{l,j}^{\pm}(t,\gm)}\cH_{l,j}
(\zeta)g({\zeta}/{\gm}-\tilde{\xi}_{z_l}/{\gm^2})d\zeta
\end{equation}
with respective contours $\cC_{l,j}^{\pm}$ given by
$\cC_{l,j}^{\pm}(t,\gm)$
$=$ $\{s\gm+\tilde{\xi}_{z_l}/\gm|0\leq s\leq x,\pm\xi>0\}$ if $j>J^{\pm}_l$
and
$\cC_{l,j}^{\pm}(t,\gm)$ $=$  $\{s\gm+\tilde{\xi}_{z_l}/\gm|x\leq s<\infty,\pm\xi>0\}$ if
$j\leq J^{\pm}_l.$ Note that the contours are chosen so that ${\zeta}/{\gm}$$-\tilde{\xi}_{z_l}/{\gm^2}$
give only real values.

For each $l$ and choice of $\pm$ sign we have solutions
to $\cL_{\gm}f=g(x)$ for $(t,\gm)$ on
the corresponding domains. Since $\partial_x^jt$ $\lesssim 1$ $\forall j$, we find that for any given $\gk$
there is an $a>0$ so that, on their domains,
\begin{equation}\partial_x^kF_l^{\pm}\lesssim
(1+|t(x,\tilde{\xi}_{z_l},\gm)|)^a\lesssim(1+|\xi|)^a\notag\end{equation}
%on $\pm\xi>0$ (resp.)
is satisfied for $\forall k\leq \gk$ and $\forall l$. The proof
follows the analysis in \cite{w1} and details are deferred to the
Appendix (see Proposition \ref{dunno} and the comments that follow).
Define the $F_l^{\pm}$'s to be zero outside their corresponding
domains and now let $\chi_l(\theta)$ (for $\gth=\arg \gm$) be a
partition of unity subordinate to the $\cU_l^{\pm}$'s and let $c_l$
be quantities that are constant with respect to $x,\gm$ and $\xi$.
We then set
\begin{equation}\label{cG}
\cG(x,\xi,\gm)=\sum_{l=1}^mc_l\chi_l(\theta)(\gTh(\xi)F^+_l(t(x,\tilde{\xi}_{z_l},\gm))
+\gTh(-\xi)F^-_l(t(x,\tilde{\xi}_{z_l},\gm)))
\end{equation}
where $\gTh$ is the unit (Heaviside) step function to create a smooth function for $x\in[-M,M]$, $\xi\in\BR$,
$\gm\in$ $\{re^{i\gth}|0\leq\gth\leq\pi/2\}.$

We have not shown $\cL_{\gm}$ to have regular parametrices for complex $\gm$, as per Definition \ref{def1}, but
we can establish partial construction of solutions to $Lu=f$ with, as yet, no additional conditions.
We state
\begin{prop}\label{partsol}
Given $g\in\cC^{\infty}(\cI)$ for a neighborhood of $0\ni$ $\cI$ $\subset\BR,$ there is
a function $f(x,y,w,\xi,\gth)$ $\in$ $\cC^{\infty}(\cI\times\BR^3\times[0,\pi/2])$ satisfying
$$P(X,Y)f=g(x)e^{-i(y\xi+w\gm^2)}\sum_{l=1}^m\chi_l(\gth)e^{-iyz_l}$$ with
$\gm$ $=$ $\gl e^{i\gth}$ for fixed $\gl>0$.
\end{prop}
\begin{proof}
We employ (\ref{cG}) setting
$c_l=$ $e^{-iz_ly}$ for each $l$
and we set $f$ $=$ $(i/\gm)^{n}e^{-i(y\xi+w\gm^2)}\cG$.
%$\cF(x,y,w,\gm,\xi)$ $\df$ $$\left(\frac{i}{\gm}\right)^n
%\sum_{l=1}^m\chi_l(\theta)(\gTh(\xi)F^+_l(x,\xi+z_l,\gm)+\gTh(-\xi)F^-_l(x,\xi+z_l,\gm))$$
%\begin{align}
We note that, on the respective domains $\cU^{\pm}_l,$ the $F_l^{\pm}$'s
%(dependence on $x,\xi,\gm$ understood
as in (\ref{Fl}) now satisfy
\begin{align}L(e^{-i(y\xi+\gm^2w)}F)&=e^{-i(y(\xi+z_l)+w\gm^2)} P(\partial_x,-i(\xi+z_l+\gm^2x))F\notag\\
&=e^{-i(y(\xi+z_l)+w\gm^2)}\left({-i}{\gm}\right)^n\cL_{\gm}F_l\notag\\
&=e^{-i(y(\xi+z_l)+w\gm^2)}g(x)\notag
\end{align}
So, we obtain our desired result as we compute
\begin{align}
Lf&=\sum_{l=1}^m\chi_l(\gth)e^{-i(y(\xi+z_l)+w\gm^2)}(\gTh(\xi)\cL_{\gm}F^++\gTh(-\xi)\cL_{\gm}F^-)\notag\\
&=\sum_{l=1}^m\chi_l(\gth)e^{-i(y(\xi+z_l)+w\gm^2)}(\gTh(\xi)+\gTh(-\xi))g(x)\notag\\
&=\sum_{l=1}^m\chi_l(\gth)e^{-i(y(\xi+z_l)+w\gm^2)}g(x)\notag\end{align}
\end{proof}
We now present the

\noindent${Proof}\,of\,Lemma \,\ref{lem1}$:
We replace $g(x)$ by functions $\hat{g}(x,\xi,\eta)$ for
$g$ $\in$ $\cC_0(\BR^3),$ involving the other Fourier variable $\eta$
as we substitute
$\gm=\sqrt{|\eta|}$ into (\ref{tz}).
First we show that for $F$ as in Definition \ref{def1}
$$\cF(x,\xi,\eta) \df F(t(x,\xi,\sqrt{|\eta|}),\sqrt{|\eta|})$$
defines a function in $\cC^{\infty}(\BR)\times$ $\cS(\BR)\times$ $\cS((\gm_0^2,\infty))$:
By linearity of the operator $\cL_{\gm}$ we have that for any $r>0$ and index $m,$
\begin{align}\partial_{x}^k\cF \lesssim &
|\eta|^{-n/2}(1+|\sqrt{\eta}|)^m(1+|\xi|+|\eta|)^{-r}(1+|x|\sqrt{|\eta|}+|\xi/\eta|)^a\notag
\end{align}
for $|x|<M,$ $|\eta|>\gm_0^2,$ $\xi\in\BR$ $\forall k\leq m$.
So $\forall$ $\ga>0,$ $\partial_{x}^k\cF$  $\lesssim$ $(1+|\xi|+|\eta|)^{-\ga}$
by setting $r>\ga+a+m/2.$

Letting $\phi_l(\gth)$ denote the characteristic function of the associated
set $\cU_l$, we replace $g$ in Proposition \ref{partsol} by
$\sum_{l=1}^m\phi_l(\gth) \hat{g}(x,\xi+z_l,\pm\gm^2)$ to redefine $F_l^{\pm}$ (resp.)
and set $\gl=\gm_0^2.$
Likewise, the various partial derivatives $\partial_x^kf(x,y,w,\xi,\gth)$
are each majorized by $(1+|\xi|)^{-s}$ for
any $s>0.$
We apply $L$ to $F=f_1+f_2$ where
$$f_1\df
%\frac{1}{2\pi}\int_{\BR^2}
\frac{1}{2\pi}\int_{\BR}\int_{|\eta|>\gm_0^2}
(i/\sqrt{|\eta|})^n
e^{-i(y\xi+w \eta)}(\cF^{+}(x,\xi,\eta)+\cF^{-}(x,\xi,\eta))d\eta d\xi$$
$$Lf_1=\frac{1}{2\pi}\int_{\BR}\int_{|\eta|>\gm_0^2}e^{-i(y\xi+w \eta)}\hat{g}(x,\xi,\eta)d\eta d\xi$$
$$f_2\df\frac{1}{2\pi}\int_{\BR}\int_{0}^{\pi/2}f(x,y,w,\xi,\gth)\gl^2ie^{2i\gth}d\gth d\xi$$
$$Lf_2=\frac{1}{2\pi}\int_{\BR}\int_{\cC}
\hat{g}(x,\xi+z_l,\zeta)e^{-i(y\xi+w \zeta)}
\sum_{l=1}^m\chi_l\left(\frac{\arg \zeta}{2}\right)e^{-iyz_l}d\zeta d\xi
$$
where $\zeta=\gm^2$ and $\cC$ is the complex contour given by
boundary of the upper half of the disc centered at $0$ of radius $\gm_0^2.$
By the analyticity and integrability of $\hat{g}$ in the variable $\xi$
we may apply Cauchy's integral theorem to obtain
$$\int_{\BR}e^{-iy(\xi+z_l)}\hat{g}(x,\xi+z_l,\zeta)d\xi
=\int_{\BR}e^{-iy\xi}\hat{g}(x,\xi,\zeta)d\xi
$$ for $(\arg\zeta)/2$ $\in$ supp$\chi_l$. We now apply the Fubini-Tonelli
theorem along with Cauchy's integral theorem in the variable $\zeta$ to obtain
$$Lf_2=\frac{1}{2\pi}\int_{\BR}\int_{\cC}\sum_{l=1}^m\chi_l\left(\frac{\arg \zeta}{2}\right)\hat{g}(x,\xi,\zeta)e^{-i(y\xi+w \zeta)}d\zeta d\xi$$
$$=\frac{1}{2\pi}\int_{\BR}\int_{\cC}\hat{g}(x,\xi,\zeta)e^{-i(y\xi+w \zeta)}d\zeta d\xi$$
$$=\frac{1}{2\pi}\int_{\BR}\int_{-\gm_0^2}^{\gm_0^2}\hat{g}(x,\xi,\eta)e^{-i(y\xi+w\eta)}d\zeta d\eta$$
$$LF=
%$$\frac{1}{2\pi}\int_{\BR}\left[\int_{|\eta|>\gm_0^2}e^{-i(y\xi+w \eta)}\hat{g}(x,\xi,\eta)d\eta
%+\frac{1}{2\pi}\int_{-\gm_0^2}^{\gm_0^2}e^{-i(y\xi+w \eta)}\hat{g}(x,\xi,\eta)d\eta\right] d\xi$$
\frac{1}{2\pi}\int_{\BR^2}e^{-i(y\xi+w \eta)}\hat{g}(x,\xi,\eta)d\eta d\xi=(\hat{g}\check{)}(x,y,w)=g(x,y,w)$$
\qed

We are now prepared to state a main result from which we may determine solvability
through the representations $\gL^{\pm}_{\gm}$ for large $\gm:$
\begin{thm}\label{reg}
The operator $L$ as in (\ref{op}) is locally solvable if both $\cL^{\pm}_{\gm}$ are regular.
\end{thm}
\begin{proof}
Since our proof repeats content of previous works, we will give a sketch of proof
and defer details to these works.
We will show that an operator $\cL_{\gm}$ has a regular parametrix if
it is regular:
\begin{equation}\label{F}
F(t,\gm)=\sum_{l=1}^{J^+}\ga_l(\gm)\gpv_l^+(t,\gm)+\sum_{l=1}^{J^-}\ga_l(\gm)\gpv_l^-(t,\gm)+K(t,\gm)
\end{equation}
where $K(t,\gm)=$
$$\sum_{l=0}^n\gpv_l^+(t,\gm)\int_0^t\cH^+_l(\gt,\gm)g(\gt)d\gt =\sum_{l=0}^n\gpv_l^-(t,\gm)\int_0^t\cH^-_l(\gt,\gm)g(\gt)d\gt
$$
(cf. (12) and (13) of \cite{w3}).
We set
$\ga_l^{\pm}(\gm)\df\int_{\pm\infty}^0\cH_l^{\pm}(\gt,\gm)g(\gt)d\gt$ (resp.)
so that we can write
$$
F(t,\gm)=\sum_{l=1}^{J^+}\phi_l(t,\gm)\int_{+\infty}^t\cH_l(\gt,\gm)g(\gt)d\gt+\sum_{l=1}^{J^-}\ga_l(\gm)\gpv_l^-(t,\gm)
$$
$$
F(t,\gm)=\sum_{l=1}^{J^-}\phi_l(t,\gm)\int_{-\infty}^t\cH_l^-(\gt,\gm)g(\gt)d\gt+\sum_{l=1}^{J^+}\ga_l(\gm)\gpv_l^+(t,\gm)
$$

The estimates on $K$ and its derivatives $\partdertn{k}K$
follows as in Section 1 of \cite{w1}, applying the Chain Rule along with
integral estimates as in Proposition \ref{dunno}, to obtain
$\partdertn{k}F(t,\gm)\lesssim_{pol\,(t,\gm)}1$
on $\BR\times (\gm_0,\infty)$ for each $k$.
\end{proof}

\section{Solvability}\label{s5}
We see that when Re$\gamma_j$ $=0$ the corresponding bases function $\psi_j(t,\gm)$, for finite $\gm$, may have asymptotic growth far different than
that of a corresponding limit $\zeta_j(t)$. For instance,
 a limit function $\zeta_j$ (say) may have polynomial growth where every sequence $\psi(\cdot,\gm_l):$ $l\rightarrow \infty$ tending to it may have exponential growth or decay in the variable $t$. We distinguish
a particular subclass of operators where this discrepancy does not take place. We introduce
\begin{definition} We will say that the polynomial $P$ has property
$\cG$ if $P_n$ is generic and the $\gamma_j,$ $\gb_j:$ $1$ $\leq j$ $\leq n$ associated with $P$ satisfy
$${\text \rm Re}\gamma_j =0 \implies{\text \rm Re}\gb_j =0.$$
\end{definition}
We note that this property depends only on the coefficients of $P_n$ and $P_{n-1}.$
We are ready to state \begin{thm}\label{solve}
Suppose that $L=P(X,Y)$ where $P$ has property $\cG$.
Then, the operator $L$ is locally solvable
if ker$(\cL_{\infty}^{\pm})^*$$\bigcap$ $\cS(\BR)$ contains only the zero function for each choice
of $\pm$ sign.
\end{thm}
\begin{proof}
We will show that an operator $\cL_{\gm}$ is regularizable when $\cL_{\infty}^*$
satisfies the hypothesis.
Here, $J^+=J^-\df J$ and our result follows as in proof of Corollary 4.3 of \cite{w3}
since the associated transition matrix $A$ has real analytic coefficients and tends
to a finite limit as $\gm\rightarrow$ $\infty.$
We deduce that there is an admissible pair of bases $\vec{\phi}^{\pm}$ for which
$\vec{\phi}^{\pm}=$ $\gL^{\pm}I_{\gs}\vec{\phi^{\mp}}$ (resp.) for square matrices $\gL^{\pm}$ and $I_{\gs}$
satisfying the following for each choice of $\pm$ sign and sufficiently large $\gm>0$:
$\gL$ is lower triangular with ones on the main diagonal
where $\gL(\gm)$ $\rightarrow I$ as $\gm\rightarrow$ $\infty;$
and, $[I]_{j,k}=$ $\gd_{j,\gs(j)}$ where $\gd$ is the Kr\"oniker delta function and $\gs$ is a resolving permutation.
By carrying out the matrix multiplication we find that $\cL_{\gm}$ is regularizable.

Since both $\cL^{\pm}_{\gm}$ are regularizable, the proof is complete
by applying Theorem \ref{reg}.
\end{proof}
From \cite{w1} the hypotheses on $\cL_{\infty}^{\pm}$ are equivalent to the local solvability
of $P_n(X,Y)$, whereby we immediately conclude the following:
\begin{cor}\label{cora} Suppose that $L=P(X,Y)$ is locally solvable where
$P$ is a generic polynomial of order $n\geq 2$. Then,
the operator $L+K$ is locally solvable for any $K$ contained in the subalgebra of $\fh_1^{\BC}$
generated by $X$ and $Y$
of order less than or equal to $n-2$.
\end{cor}
\begin{cor}
Let $L$ and $K$ be as in Corollary \ref{cora} except that the characteristic roots of $P$ each have
non-zero real parts. Then the operator $L+K$ is locally solvable for any such $K$
of order less than or equal to $n-1$.
\end{cor}

We demonstrate that regularity is not a precise condition for local solvability of our operators
as an even weaker condition on $\cL_{\gm}^{\pm}$ still assures solvability. We introduce
\begin{definition}
An operator $\cL_{\gm}$ will be called quasi-regular if
there is a finite, refined open cover $\cI_l:$ $l=1,2,\hdots,m$
of a semi-infinite interval $[\gm_0,\infty)$ along with
admissible pairs $\vec{\phi}^{\pm}_l,$ with components
$[\vec{\phi}^{\pm}_l]_{j}$ $\df$ $\phi_{j;l}^{\pm}$ (resp.)
that satisfy estimates as in (\ref{reg}) but with $\gm$ restricted to $\cI_l$.
\end{definition} We note that the $\cI_l$'s can each be assigned
a unique resolving permutation $\gs_l$ associated with the transition matrix
$A_l$ defined on $\cI_l.$  We note further that it is trivial to show that
a regularizable operator is also
quasi-regular.  We will see (in Section \ref{s6}) that
for a large subclass
of our operators the aforementioned condition is exact.

\begin{thm}
An operator $L=P(X,Y)$ is locally solvable if the operators $\cL_{\gm}^{\pm}$
are both quasi-regular.
\end{thm}
\begin{proof}
Via a change of variables, if necessary, it will
suffice to show that an operator $\cL_{\gm}$ has a regular parametrix if it is quasi-regular.
We can construct functions $F_l(t,\gm)$ as in (\ref{F}) but with $F$ restricted to $\BR\times\cI_l.$
Then for a smooth partition of unity $g_l(\gm)$ subordinate to the associated $\cI_l:$ $l=1,\hdots,m$
we set
$\cF(t,\gm)$ $\df$ $\sum_{l=1}^mg_l(\gm)F_l(t,\gm).$ Local solvability of $L$ then follows as in the proof of Theorem \ref{reg}
and the cited references therein.\end{proof}

\section{Non-solvability}\label{s6}
We will develop criteria for local non-solvability of $L$ in terms
of representations $\gL_{\gm}^{\pm}$ - particularly through the
adjoints of $\cL_{\gm}^{\pm}.$  We first need to verify that our
approach to solving (\ref{transform}) has an analogue as applied to
adjoints of our partial differential operators. For constructing
functions in ker$L^*$ the method is clear via
\begin{prop}
Under $\gL_{\gm}^{\pm}$, the operator $L^*$ has representations given by
$\gL^{\pm}_{\gm}(L^*)$ $=(i\gm)^n(\cL^{\pm}_{\gm})^*$ (resp.).
\end{prop}
\begin{proof}
We will drop $\pm$ the superscript. It is not difficult to show that the conclusion of our proposition
holds for operators homogenous in $X$ and $Y$ (cf. Proposition 2.3 \cite{w1}) so that
$\gL_{\gm}(P_l(X,Y)^*)=(\gL_{\gm}(P_l(X,Y)))^{*}$ $\forall l$.
It follows that $\gL_{\gm}(L^*)$ $=(\gL_{\gm}(L))^*$ and our result is immediate.
\end{proof}
Our next result follows as in Section 2 \cite{w1} and, hence, we provide
here only a sketch of proof of the following
\begin{thm}\label{nonsol1}
Suppose that $\exists\psi\in$ ker$\cL_{\gm}^*$ such that $\psi\cdot\chi$
$\in$ $\cS(\BR\times \cI)$$\setminus$$\{0\}$
for $\chi$ $\in$ $\cC_0^{\infty}(\cI)$ where $\cI$ is a bounded open subinterval of $\BR^+$.
Then, the associated operator $L$ is not locally solvable.
\end{thm}
\begin{proof}
We suppose (perhaps after a change of variables) that $\cI$ $=(1,2).$
It follows as in Proposition 2.4 \cite{w1} that
there is a non-trivial function $\Psi$ $\in$ ker$L^*$ $\bigcap$ $\cS(\BR^3)$
with $\gm$ $=$
$\sqrt{\xi_3}$ given by:
$$\hat{\Psi}(x,{\xi_2},{\xi_3})=\psi(t(x,\xi_2,\gm),\gm)F(\xi_2,\gm),$$
where $F\in\cC_0(\BR^2)$ and
supp$F(\xi_2,\gm)$ $\subseteq$ $\{ \vec{\xi}| \, |\xi_2|<1 \,\,\&\,\,\xi_3\geq 1\}$.

We set $v_{\gt}\df\phi(\vec{x})\Psi(\gt x,\gt y, \gt^2 w)$ and $F_{\gt}$ $\df$ $\phi(x/\gt,y/\gt,w/\gt^2)\Psi(\vec{x})$
for some $\phi\in$ $\cC_0(\BR^3)$ such that $\phi=1$ on a neighborhood of the origin. These functions
are constructed likewise to Propositions 2.5 and 2.7 of \cite{w1}, so that in the Sobolev norm $v_{\gt}$ satisfies
$||L^*v_{\gt}||_{(\gn)}$ $\lesssim \gt^{-a}$ for all
$\gt>1$ for any fixed $\gn$ and positive $a.$

To complete our result, it suffices to show that for any integer $N>0$ there are constants $C_1,C_2>0$
so that $||v_{\gt}||_{(-N)}>C_1\gt^{-N-2}$ and $||v_{\gt}||_{(-N-3)}<C_2\gt^{-N-4},$
thereby showing that for sufficiently large $\gt>1$
H\"ormander's criteria from Lemma 26.4.5 \cite{ho2} is violated (see also
(2.19) \cite{w1}). The desired estimates
follow in the same manner as (2.13) through (2.18) \cite{w1}, considering Remark \ref{rem1} (Appendix).
\end{proof}

We apply the above results to develop a priori criteria for non-solvability.
Given (distinct) characteristic roots $\{\gamma_j\}_{j=1}^n$ and associated $\gb_j$ as in (\ref{beta}), let us denote $\cB^{\pm}\df\{j|\pm\text{\rm Re}\gamma_j>0\},$
and $\cE^{\pm}\df\{j|\text{\rm Re}\gamma_j=0\, \&\, \pm\text{\rm Re}\gb_j> 0\}$
(resp.).
We state our result in terms of the
cardinality ($card$)
of these sets:
\begin{thm}\label{nonsol2}
$L=P(X,Y)$ is not locally solvable if either of the following holds:
\begin{itemize}\item[1)] $card(\cB^+\bigcup\cE^+)$ $+$ $card(\cB^+\bigcup\cE^-)>n;$ or,
\item[2)]$card(\cB^-\bigcup\cE^+)$ $+$ $card(\cB^-\bigcup\cE^-) >n$.
\end{itemize}
\end{thm}
 \begin{proof}
By choosing the appropriate representation, we may suppose that case 1) holds: By Remark \ref{remark1} along with (\ref{D1}) we see that
$\cE^+\bigcup\cE^-$ is the same under each such representation.
We note further that $\phi(t)\in$ ker$P(i\partial_t,\gm t)^*$ $\iff$ $\phi(-t)$
$\in$ ker$P(-i\partial_t,-t)^*$ where the associated parameters $\gamma_j,$ $\beta_j$
transform as $(\gamma_j,\beta_j)$ $\rightarrow$ $(\gamma_j,-\beta_j)$
for each $j.$

By dimensional arguments it follows that for any admissible pair of bases $\vec{\phi}^{\pm}$ of ker$\cL_{\gm}$ the
associated transition matrix $A$ satisfies $A\leftrightarrow I_{\gs}$
for a non-resolving $\gs$ for $\gm$ on a non-empty open interval.  Then there is a pair of bases
$\vec{\fh}^{\pm}$ of ker$\cL_{\gm}^*$
given by
$\vec{\fh}^-=(A^{-1})^{\dagger}\vec{\fh}^+$
(admissible after reordering $h_j\rightarrow $ $h_{n+1-j}$ $\forall j$)
where $(A^{-1})^{\dagger}\leftrightarrow I_{\gs}.$
It follows as in Corollary 4.3 \cite{w3} that
$\exists l$ $\in$ $\cB^+$ $\bigcup$ $\cE^+$ so that
\begin{equation}\label{adj}
\fh_l^-(\cdot,\gm)=\sum_{j\leq J^+}a_j(\gm)\fh_j^+(\cdot,\gm)
\df \overline{\psi}(\cdot,\gm)\end{equation}
where the $a_j$ are real analytic functions, not all trivial.
By Proposition \ref{moremainests} and analyticity arguments on the $a_j$'s,
there is a bounded, non-empty interval $\cI$ $\subset R^+$ so that $\forall k$
$\dertn{k}\psi(t,\gm)$ $\lesssim e^{-\gd_k|t|}$ holds
on $\BR\times \cI$ for some $\gd_k>0$.
 The result now follows by applying Proposition \ref{nonsol1}.
\end{proof}

\noindent{\it Proof of Theorem \ref{thm2}:} Let $\tilde{\gm}>0$ be the accumulation
point. By analyticity of transition matrices $A(\gm)$
we find that
$A\leftrightarrow I_{\gs}$
for all $\gm\in\cI$ containing $\tilde{\gm}$ for
some fixed, non-resolving $\gs.$ It then follows that a function
$\psi(\cdot,\gm)$ can be constructed as done in (\ref{adj}).
\qed

%What if $\cL_{\gm}$ was not regularizable?

%Let up denote by $h_{j;l}$ those functions as in Proposition \ref{thehs}
%constructed in the same fashion by bases $\vec{\phi}_l^{\pm}$ where $\gm$ is restricted
%to $cI_l$.

\begin{prop}\label{pcwsreg}
If $\cL_{\gm}$ is not quasi-regular, then there is a sequence $\gm_l:l=1,2,\hdots$
with $\gm_l$ $\rightarrow$ $+\infty$
and functions $Y_{j}^{\pm}(t,\gm)$ $\asymp_{pol\gm}$ $\bar{h}_{j}(t,\gm)$ for $h^{\pm}_j$'s as in Proposition \ref{thehs}
for which the following hold:  $\exists k\leq J^-$ and coefficients $\ga_j(l)$ so that
$$\cF_l(t)\df Y_{k}^-(t,\gm_l)=\sum_{j=1}^n\ga_j(l)Y_{j}^+(t,\gm_l)$$
where $\forall j$
$\ga_j(l)$ $\leq$ $C\gm_l^a$ for some fixed $C,a>0$ and where
for any $B>0$ we find
$\ga_j(l)< $ $C_B\gm_l^{-B}$
is satisfied for some constant $C_B>0$ for each $j>J^+$. Moreover, the $\cF_l$'s
may be chosen so as to converge
(ucs) to a
non-trivial function in ker $\cL_{\infty}^*$.
\end{prop} The proof follows as that of Proposition 3.2 $\cite{w3}$ and further elaboration is deferred to
the Appendix (see Remark \ref{w3stuff}).

\begin{thm}
Suppose that for an operator $L=$ $P(X,Y)$ one of operators $\cL^{\pm}$ (say $\cL$)
is not quasi-regular. Then $L$ is not locally solvable if either of the following additional conditions hold:
\begin{itemize}\item[1)]
$P$ has property $\cG$;
\item[2)]
%ker$\cL^{*}_{\infty}$$\bigcap$$\cS(\BR)$$\neq$$\{0\}$ and
Functions $\cF_l$ $\in$ ker$\cL_{\gm_l}^*$ as in Proposition \ref{pcwsreg} can be chosen to satisfy $\cF_l(t)$
$\lesssim_{pol\,\,t} e^{bt}$ for some $b>0$ with implied bounds uniform in $l$.\end{itemize}
\end{thm}
\begin{proof}
We may, perhaps after a change of variables, suppose that it is
the operator $\cL_{\gm}=\cL_{\gm}^+$ that is not
piecewise regularizable. We will show in this case that necessary criteria, in the form of an inequality involving $L^*,$ will be violated.
From \cite{ho2} we find that if $L$ is
locally solvable near $0,$ then the following holds:
$\forall\epsilon$ $>0,$ $\exists N>0$ such that
\begin{equation}\label{horm} |\int \phi\bar{\Psi}|\leq
N||\phi||_{\cC^{N}}||L^*\Psi||_{\cC^{N}}
\end{equation}
for every $\phi,$ $\Psi$ $\in \cC^{\infty}(\BR^3)$ supported in
$|(x,y,w)|<\epsilon.$

We begin our construction of functions $\phi_l,\Psi_l:$ $l=1,2,\hdots$ which for any given $N$ violates this
inequality for sufficiently large $l.$ Under our hypothesis we conclude from Proposition \ref{pcwsreg} that
there is an increasing sequence $1<$ $\gm_l$ $\rightarrow$ $+\infty$ and non-trivial functions $\cF_l$ $\in$ ker$\cL^*_{\gm_l}$ $\forall l$ satisfying the following:
There are real constants $M_1<M_2$ and a positive $\gd>0$ so that $\cF_l(t)$ $>1$ $\forall l$ on the interval
$(M_1-\gd,M_2+\gd);$
and, for any given $k$
there is an exponent $a\geq 0$ so that for some constants $r,s>0$
\begin{equation}\label{cF}\dertn{j}\cF_l(t)\lesssim (1+|t|)^a
(\fC(l)e^{s Q(t)}+e^{-r Q(t)})
\end{equation}
where $Q(t)$ $\df t^2$ and where, for any $\ga>0,$ $\fC(l)\lesssim \gm_l^{-\ga}$ holds uniformly $\forall l$.
Letting $\gn_l$ $\df\sqrt{\ln\gm_l},$ we
choose a non-negative $h$ $\in$ $\cC_0^{\infty}(\BR^3)$
supported in $|x|\leq\gl_2,$ $|y|\&|w|\leq\gep/2$ so that
$h(\vec{x})=1$ on $|x|\leq\gl_1,$ $|y|\&|w|<\gep/3;$
here, $1\leq \gl_1<\gl_2$ are constants yet to be determined.
We set $h_l(\vec{x})$ $\df$ $h(x\gm_l/\gn_l,y,w)$ (noting that
$ \gm_l/\gn_l$ tends to $\infty$ as $l$ tends to $\infty$).
Finally, we choose a non-negative $\chi$ $\in$ $\cC_0^{\infty}(\BR)$ supported
in $(M_1,M_2)$ so that $\int_{\BR}\chi(s) ds$ $=1.$

We now introduce functions
$$\Psi_l(\vec{x}) \df h_l(\vec{x})\int_{\BR}e^{-i(\gm_l^2w+\xi y)}\gm_l^{-1}\chi(\xi/\gm_l)\cF_l(x\gm_l+\xi/\gm_l)d\xi\df h_l(\vec{x})f_l(\vec{x})$$
We see that
$f_l(\vec{x})\df$ $\int_{M_1}^{M_2}e^{-i(\gm_l^2w+\gm_lu y)}\chi(u)\cF_l(x\gm_l+u)du$ $\in$ $\cC^{\infty}(\BR^3).$
Choose a non-negative $\phi(\vec{x})$ $\in\cC_0^{\infty}(\BR)$ supported in $|\vec{x}|<\gep/2$ so that
$\phi(\vec{x})$ $=1$ on $|\vec{x}|<\gep/3$ and set $\phi_l$ $\df$ $\phi(x,y\gm_l,w\gm_l^2).$

We now compute an upper bound of the RHS of (\ref{horm}), first by estimating the norms of $L^*\Psi_l.$
We compute
$L^*\Psi_l$ $=\cP_l f_l$ where $\cP_l$ $\df [L^*,h_l]$ is a partial differential operator of order $n$ supported in
$$U_l\df \{\vec{x}|\gn_l\gl_1\leq|x|\gm_l\leq\gl_2\gn_l\, \&\, |y|,|w|\leq \gep/2\}.$$
By the Leibniz Rule we may write $\cP_l=\sum_{|\ga|=1}^na_{\ga;l}(\vec{x})\partial_{\vec{x}}^{\ga}$ where the coefficients satisfy $a_{\ga;l}$ $\lesssim$ $\gm_l^{n-|\ga|}$ uniformly on $U_l.$
The operator
$\partial_{\vec{x}}^{\gb}\cP_l,$ of order $n+|\gb|$, behaves
likewise, but $\forall \ga$ the coefficient multiplying $\partial^{\ga}_{\vec{x}}$ is dominated by
$\gm_l^{n+|\gb|-|\ga|}.$
Further, for $\vec{x}\in$ $U_l,$ $Q(\gl_1\gn_l+M_1)$ $\leq$ $Q(x\gm_l+\xi/\gm_l)$ $\leq Q(\gl_2\gn_l+M_2)$.
For any multi-index $|\ga|$ $\leq k$ we have the following bounds (uniform also in $l$)
\begin{equation}\label{cPest}\partial^{\ga}_{\vec{x}}\cP_l \circ f_l(\vec{x})
\lesssim \gm_l^{2k+n}(1+\gn_l)^{a+n+k}(\fR_l+\fS_l)
\end{equation}
where we have set $\fR_l$ $\df$ $\fC(l)e^{s\gl_2^2\gn_l^2}$ $=\fC(l)\gm_l^{\gl_2^2s}$
and $\fS_l$ $\df$ $e^{-r\gl_1^2\gn_l^2}$ $=\gm_l^{-\gl_1^2r}.$
So by setting $k=N,$ $ \gl_1^2 > (3N+2n+a)/r+B$ and fixing $\gl_2$ $>\gl_1,$ we obtain
$||L^*\Psi||_{\cC^N}\lesssim (\gm_l^{-B}+\fC(l)\gm_l^{\gl_2^2s}).$
Then, for any integers $N,B\geq 0,$
$\exists C_{_{N,B}}>0$ so that $||L^*\Psi||_{\cC^N} \leq \cC_{_{N,B}}\gm_l^{-B}.$

We now estimate the norms of $\phi_l.$ For any $k$
the derivatives $\partial^{\ga}_{\vec{x}}\phi:$ $|\ga|\leq k$
are compactly supported in $|\vec{x}|<\gep.$ Then, by the Chain Rule,
$\partial_{\vec{x}}^{\ga}\phi_l(\vec{x})$ $\lesssim \gm_l^{2k}$ for such $\ga.$
Furthermore, we find that the RHS of (\ref{horm}) is majorized by $\gm_l^{2N-B}$ uniformly in $l.$

We now find a lower bound on LHS of (\ref{horm}). Let us restrict $0\leq \gep\leq \pi/4$ (say).
Then, for $\vec{x}$ $\in$ supp$\phi_l,$ Re$e^{i(\gm^2_ly+\gm_lw)}$ $\geq $ $\sqrt{2}/2.$
And, for sufficiently large $l$ (st. $\gn_l>\gl_1/\gd$) and $\vec{x}$ $\in$ supp $h_l,$
we have $\cF(x\gm_l+\xi/\gm_l)\geq 1$. Hence,
$${\text \rm Re}(\phi_l(\vec{x})\bar{\Psi}_l(\vec{x}))
\geq \frac{\sqrt{2}}{2}\phi_l(\vec{x})h_l(\vec{x})\int_{\BR}\chi(u)du; $$
and, for sufficiently large $l$ (as above and for $\gm_l/\gn_l>6\gl_1/\gep$)
$$\int_{\BR}h_l(\vec{x})\phi_l(\vec{x})d\vec{x}\geq
\int_{-\frac{\gl_1\gn_l}{2\gm_l}}^{+\frac{\gl_1\gn_l}{2\gm_l}}
\int_{-\frac{1}{2\gm_l^{2}}}^{+\frac{1}{2\gm_l^{2}}}\int_{-\frac{1}{2\gm_l}}^{+\frac{1}{2\gm_l}}1\,dydwdx
=\frac{\gl_1\sqrt{\ln\gm_l}}{\gm_l^4}$$
Therefore LHS of (\ref{horm}), for sufficiently large $l$,
is bounded below by $\lambda_1\gm_l^{-4}/\sqrt{2}.$ The condition on $L$ can
therefore be shown not to hold as we choose $\gl_1>0$ so large that we may fix
$B>4+2N$. The condition
(\ref{horm}) is then violated for all sufficiently small $\gep>0$ for any index $N$ for all sufficiently large
$l$; and, hence, the result of case 1 is shown.

The proof of case 2 is similar to that of case 1 by replacing $Q$ in (\ref{cF})
with $Q_l(t)\df$ $|t|/\gm_l$ and by redefining
$\gn_l\df$ $\gm_l\ln\gm_l,$ $h_l(\vec{x})$ $\df$ $h(x\gn_l/\gm_l,y,w).$
Then (\ref{cPest}) holds for $\fR_l$ $\df$ $\fC(l)e^{b\gl_2\gn_l/\gm_l}$ $=\fC(l)\gm_l^{b\gl_2}$ and
$\fS_l$ $\df$ $e^{-r\gl_1\gn_l/\gm_l}$ $=\gm_l^{-r\gl_1}$ for $l=1,2,\hdots$ 
We may choose $\gl_1<\gl_2$ so large that RHS of (\ref{horm}) is likewise majorized by
$\gm_l^{-B}$ for any desired $B>0$ as $l\rightarrow \infty.$ In this case we compute
$\sqrt{2}\text{Re}\int_{\BR}\phi\bar{\Psi}d\vec{x}$ $\geq$ $\gl_1\gm_l^{-3}/\ln\gm_l$ for sufficiently large $l$, so
that LHS has a lower bound the same as that of case 1. The criteria (\ref{horm}) are thus violated
and the proof is complete.
\end{proof}
We find that quasi-regularity of both $\cL^{\pm}_{\gm}$ is an exact condition for various subclasses of our operators.
\begin{cor}\label{quasi}
If $L=P(X,Y)$ where $P$ has property $\cG,$
then $L$ is locally solvable if and only if both $\cL^{\pm}_{\gm}$ are quasi-regular.
\end{cor}
\begin{cor}
Suppose that the characteristic roots of (generic) $P_n$ associated with $L=P(X,Y)$
satisfy either Re$\gamma_j$ $\neq$ $0$ $\forall j$ or Re$\gamma_j$ $=$ $0$ $\forall j$.
Then $L$ is locally solvable if and only if both $\cL^{\pm}_{\gm}$ are quasi-regular.
\end{cor}
At this point one may suspect that local solvability of $P(X,Y)$ is not always assured 
whenever $P_n(X,Y)$ is solvable. Indeed, we give
examples of non-solvability in such some such cases in the next section.

\section{Some second-order examples}\label{s7}

We now give some examples of solvable and non-solvable operators in the second-order case.
We start with criteria for local non-solvability for the adjoints of the operators
\begin{equation}\label{secordex}
 L=-X^2-i{a}_1YX+{a}_2Y^2-i{\ga}[X,Y]-ib_1X+b_2Y+c
\end{equation}
where $a_k,b_k,\ga,c$ complex numbers with $a_1^2\neq 4a_2$.
We compute
 $$\cL_{\gm} =\dertn{2}+a_1t\dert+a_2t^2+\ga +\frac{b_1}{\gm}\dert+\frac{b_2}{\gm}t+\frac{c}{\gm^2}$$
 With $\gamma_j$ as the characteristic roots, Re$\gamma_1\geq$ Re$\gamma_2,$
 %we set $$\gamma_{1,2}=\frac{-a_1\mp\sqrt{a_1^2-4a_2}}{2}$$ (resp.).
 the corresponding $\gb_j$'s are given by (\ref{beta}) to be
 $$
\gb_1= \frac{1}{\gamma_2-\gamma_1}(b_1+\gamma_1b_2);
 \,\,
\gb_2=\frac{-1}{\gamma_2-\gamma_1}(b_1+\gamma_2b_2).
 $$
Thus, from Theorem \ref{nonsol2}, we immediately conclude the following
\begin{prop}
The adjoint operator $L^*$ for $L$ as in (\ref{secordex})
is not locally solvable if one the cases hold:
%$$
%\text{\rm Im}(\bar{\gamma_2}-\bar{\gamma_1})\text{ \rm Im}(b_1+\gamma_1b_2)
%>\text{\rm Re}(\bar{\gamma_2}-\bar{\gamma_1})
%\text{ Re}(b_1+\gamma_1b_2)$$
%$$
%\text{\rm Im}
%(\bar{\gamma_1}-\bar{\gamma_2})\text{ \rm Im }
%(b_1+\gamma_2b_2)>\text{Re}(\bar{\gamma_1}-\bar{\gamma_2})\text{ Re}(b_1+\gamma_2b_2)$$
\begin{itemize}
\item[1)]
Re${\gamma_1}$ and Re$\gamma_2$ are non-zero and have the same sign;
\item[2)]
{Re}$\gamma_1=0>$ {Re}$\gamma_2$ and
$$\text{\rm Im}({\gamma_1}-{\gamma_2})\text{ \rm Im}(b_1+\gamma_1b_2)
>\text{\rm Re}{\gamma_2}
\text{\rm Re}(b_1+\gamma_1b_2);$$
\item[3)]
Re$\gamma_2=0<$ {Re}$\gamma_1$ and
$$\text{\rm Im}
({\gamma_2}-{\gamma_1})\text{ \rm Im }
(b_1+\gamma_2b_2)<\text{\rm Re}{\gamma_1}\text{\rm Re}(b_1+\gamma_2b_2)$$
\end{itemize}
\end{prop}
%
%We find that it is not necessary for the operator $P_n(X,Y)$ to be non-solvable for
%$P(X,Y)$ to be non-solvable.
As an example, let us set $a_2=\ga$ $=0$ and let $a_1=-2\gl$ for some real $\gl\neq 0$.
The characteristic roots are $1$ and $2\gl$.
Here, $\cL_{\infty}$ is self- adjoint and bases $\vec{\psi}^{\pm}$ of ker$\cL_{\infty}$ are given:
$\psi_1^{\pm}$ $=1$, $\psi_2^{\pm}$ $=$ $\int_{\pm\infty}^te^{\gl s^2}ds$ (resp.) when
$\gl<0$; and, $\psi_1^{\pm}$ $=$ $\int_{0}^te^{\gl s^2}ds,$ $\psi_2^{\pm}=1$ for
$\gl>0.$ So, ker$\cL_{\infty}$$\bigcap$$\cS(\BR)$$\setminus\{0\}$ is empty for any
such $\gl$.
Thus, the associated operator $L^*$ is locally solvable when $b_1=b_2=c=0.$
However, the operator $L^*$ is not locally solvable for any $b_2$ and $c$ when Re$b_1>0$, although
the operator $P_2(X,Y)^*$ is locally solvable.

We also note, conversely, that non-solvability of $P_n(X,Y)$ is not generally a sufficient condition for
non-solvability of $P(X,Y)$.
A class of operators, known as generalized Laplacians,
serve to demonstrate this point: Let $L_{\gl,\ga}$ be given by
\begin{equation}\label{mprex}
(\gl^2-1)L_{\gl,\ga}=(1-\gl^2)X^2 + Y^2+{i\gl}(XY+YX) +i\ga[X,Y]
\end{equation}
for constant $\gl,\ga$ such that $-1<\gl<1$ is real and $\ga$ $\in\BC.$ Here,
$L_{\gl,\ga}$ is not locally solvable when $\ga\in\BZ^+$ is odd; yet, for any constant $c\neq0,$ $L_{\gl,\ga}+c$ is locally solvable
for any $\gl,\ga$ in their domains (see Theorem 3.3 \cite{mpr}).
We elaborate on operators (\ref{mprex}) as we sketch an alternate proof (viz \cite{dpr,fs,s})
that the operator $L_{\gl,\ga}+c$ is locally solvable for any $c\neq 0:$

We start with the case $\gl=0$.
Since the characteristic roots are $\pm 1$, from Theorem \ref{reg} it suffices to show that
$\cL_{\gm}$ $=\partial_t^2-t^2+\ga-z\gm^{-2}$ for any fixed $\ga$ and $z\neq0$ is regular since the analysis for each associated $\cL_{\gm}^{\pm}$
is essentially the same. In the case
$\ga>0$ not an odd integer we see that
ker$\cL_{+\infty}$ $=$ ker$\cL_{+\infty}^*$ which contains no $\cS(\BR)$-class functions other than the zero function.
It then follows from
Theorem \ref{solve}, that (\ref{mprex}) is locally solvable.

Let us now continue with
the case $\gl=0$ but now with $\ga>0$ an odd integer.
We form (admissible) bases $\vec{\psi}^{\pm}$ of ker$\cL_{\gm}$ using the well-known parabolic cylinder functions
$U,V$ (cf. Chapter 19 \cite{as}): We set $\fa=\fa(\gm,\ga,z)\df\frac{-\ga+z\gm^{-2}}{2}$ and write
\begin{align}\phi_1^{\pm}(t,\gm)&=V(\fa,\pm\sqrt{2}t)\asymp e^{t^2/2}(\pm t)^{\frac{-\ga+z\gm^{-2}-1}{2}}
\notag\\
\phi_2^{\pm}(t,\gm)&=U(\fa,\pm\sqrt{2}t)\asymp e^{-t^2/2}(\pm t)^{\frac{\ga-z\gm^{-2}-1}{2}}
\notag\end{align}
for $|\gm|>2|z|$, $\pm t>1$ (resp.).
With sufficiently large $\gm_0>2|z|,$
the associated transition matrix $A(\gm)$
satisfies\begin{align}
[A]_{1,1}&=\sin(\pi \fa)\asymp 1&; [A]_{1,2}&=\pi^{-1}\gC(\fa+1/2)\cos^2(\pi \fa)\asymp \gm;
\notag\\
[A]_{2,1}&=\pi/\gC(\fa+1/2)\asymp \gm &;[A]_{2,2}&=\sin(\pi \fa)\asymp 1\notag
\end{align}
on $(\gm_0,\infty);$ and, hence, $\cL_{\gm}$ is regular.

We elaborate further on the odd $\ga$ case above to give a partial demonstration of our parametrix,
regarding the steps involving (\ref{Fl}).
The estimates on the $\phi_j^{\pm}$ hold on complex sectors of the form
$|\pm\arg t$ $-\pi/4|$ $\leq \gd$ $< $ $\pi/4$
(resp.) for $\gm$ sufficiently large along complex arcs
Im$\gm\geq 0,$ $|\gm|$ fixed. Hence, we have clear choices for $\cU_{1}^{\pm}$ as in (\ref{tz}),
using $\ga_1=\pi/4$, $\gd$ $=\pi/2$, and $\gl_1=\gm_0^2$.
Moreover, there are constants $c_1,c_2\neq 0$ so that
$\cH_1^{\pm}=c_1\phi_2^{\pm}$ and $\cH_2^{\pm}=c_2\phi_1^{\pm}$ (resp.),
whereby the integral formula along with contours $\cC_l^{\pm}$ are readily determined.

We now suppose that $\gl\neq 0.$ The operation $\cP_{\gm,\gl}:$
$\cP_{\gm,\gl}\phi(t)$ $=$ $e^{\gm\gl t^2/2}\phi^{\pm}_j(t/\sqrt{1-\gl^2})$
gives
$\text{ker}\gL_{\gm}(L_{\gl,c})
$ $=\cP_{\gm,\gl}(\text{ker}(\gL_{\gm}(L_{0,c}))$
(see Proposition 7.2 \cite{mpr}). Moreover, $\forall$
$\phi\in$ ker$\cL_{\gm},$ $\cP_{\gm,\gl}(\phi)$
$\in\cS(\BR)$ if and only if $\phi\in\cS(\BR).$
Since $|\gl|<1$, the polynomial $P$
associated with (\ref{mprex}) has property $\cG.$
It now follows from Theorem \ref{solve}
that (\ref{mprex}) is locally solvable in the specified
domains of $\ga$ and $\gl$. 
\section{Appendix}\label{appendix}
\begin{prop}\label{dunno}
Given $\ga\in \BR,$ and $a,\gamma>0,$
$$\int_0^te^{\gamma s^2 + \ga s}(1+s)^a\,dt
\lesssim e^{\gamma t^2 +\ga t}(1+t)^{a+1}$$
$$\int_t^{\infty}e^{-\gamma s^2 + \ga s}(1+s)^a\,dt\lesssim e^{-\gamma t^2 +\ga t}(1+t)^{a}
$$
for $t\geq 0.$ Moreover, for any positive $\ga_0$ constants
implicit in the asymptotics may be chosen so that the estimates
are uniform for $|\ga|\leq \ga_0.$
\end{prop}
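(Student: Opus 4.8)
The plan is to prove each estimate by a direct integration-by-parts / comparison argument, exploiting the fact that on $[0,\infty)$ the function $e^{\gamma s^2}$ (resp.\ $e^{-\gamma s^2}$) is monotone and its logarithmic derivative $2\gamma s$ dominates the slowly-varying factor $(1+s)^a$. For the first inequality I would write $e^{\gamma s^2+\alpha s}(1+s)^a = \frac{d}{ds}\!\left(e^{\gamma s^2+\alpha s}\right)\cdot\frac{(1+s)^a}{2\gamma s+\alpha}$ on the range where $2\gamma s+\alpha$ is bounded below by a positive constant, integrate by parts, and observe that the boundary term at $s=t$ is exactly $e^{\gamma t^2+\alpha t}\frac{(1+t)^a}{2\gamma t+\alpha}\lesssim e^{\gamma t^2+\alpha t}(1+t)^{a+1}$ (the extra power of $(1+t)$ absorbs the possible vanishing of the denominator near $s=0$), while the remaining integral has an integrand of the same shape as the original but with $a$ replaced by something no larger, so it can be reabsorbed; near $s=0$, where $2\gamma s+\alpha$ may be small, I split off a fixed compact piece $[0,s_1]$ whose contribution is $O(1)\lesssim e^{\gamma t^2+\alpha t}(1+t)^{a+1}$ trivially.

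For the second inequality the mechanism is cleaner because there is no boundary issue at infinity: on $[t,\infty)$ one has $2\gamma s-\alpha\geq 2\gamma t-\alpha$, and for $t$ beyond a fixed threshold this is $\geq \gamma s$, say; then $e^{-\gamma s^2+\alpha s}(1+s)^a=-\frac{d}{ds}\!\left(e^{-\gamma s^2+\alpha s}\right)\cdot\frac{(1+s)^a}{2\gamma s-\alpha}$, integration by parts gives the boundary term $e^{-\gamma t^2+\alpha t}\frac{(1+t)^a}{2\gamma t-\alpha}\lesssim e^{-\gamma t^2+\alpha t}(1+t)^{a}$ (here \emph{no} extra power is needed, since for large $t$ the denominator is $\asymp t$ which cancels against a power in the numerator — this is why the two estimates have different exponents $a+1$ versus $a$), plus a tail integral of the same form with the exponent unchanged or decreased, which is reabsorbed by a Gronwall-type / geometric-series bootstrap after iterating finitely many times or by noting the iteration constant is $<1$ for $t$ large. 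The small-$t$ range $0\le t\le t_1$ is handled separately: there $e^{-\gamma t^2+\alpha t}(1+t)^a\asymp 1$ and the integral $\int_0^\infty e^{-\gamma s^2+\alpha s}(1+s)^a\,ds$ is a finite constant (a Gaussian moment), so the bound is immediate.

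Finally, to get uniformity for $|\alpha|\le\alpha_0$, I would keep track of the thresholds: all the "$s$ large enough" choices ($2\gamma s+\alpha\ge 1$, $2\gamma s-\alpha\ge\gamma s$, etc.) can be made to depend only on $\gamma$ and $\alpha_0$, not on the particular $\alpha$; the finitely many compact-range integrals are continuous in $\alpha$ hence uniformly bounded on $[-\alpha_0,\alpha_0]$; and the geometric-series constant in the bootstrap likewise depends only on $\gamma$ and $\alpha_0$. The main obstacle — really the only subtle point — is the behaviour near $s=0$ in the first integral, where the naive integrating factor $1/(2\gamma s+\alpha)$ is singular (when $\alpha$ can be zero or negative): this is precisely what forces the weaker exponent $a+1$ rather than $a$, and the clean way to dispose of it is to bound the contribution of a fixed interval $[0,s_1]$ crudely by a constant and only run the integration-by-parts argument on $[s_1,t]$, noting $e^{\gamma t^2+\alpha t}(1+t)^{a+1}\ge c>0$ on $t\ge 0$ for the constant to be absorbed. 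Everything else is routine.
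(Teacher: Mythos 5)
Your approach is correct and genuinely different from the paper's. The paper proves this by the linear substitution $u=s+\alpha/(2\gamma)$, completing the square and reducing both integrals to the already-established $\alpha=0$ case cited from \cite{w1}; what remains is bookkeeping to pull out a factor $(1+\alpha_0/(2\gamma))^a$ and track the shifted limits, and uniformity over $|\alpha|\le\alpha_0$ is then immediate. Your integration-by-parts route is self-contained (no appeal to the $\alpha=0$ result) and in fact yields a sharper exponent; the one point that needs to be pinned down precisely is the absorption. After one integration by parts on $[s_1,t]$ the leftover integral has the same shape with $(1+s)^a$ replaced by something $\lesssim(1+s)^{a-2}\le(1+s_1)^{-2}(1+s)^a$, so it is at most $C(1+s_1)^{-2}$ times the original; choosing the split point $s_1$ large (depending only on $\gamma,\alpha_0,a$) so that $C(1+s_1)^{-2}<1$ lets you rearrange and conclude. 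Your ``iterate finitely many times'' alternative does not close on its own, since each iterate is still an integral of the original type; only the $<1$ contraction constant finishes it. Two side remarks in your write-up are also off, though harmlessly so: the exponent $a+1$ is not forced by the singularity of $1/(2\gamma s+\alpha)$ near $s=0$ --- the boundary term already gives $(1+t)^{a-1}$ for $t\ge s_1$, and the compact $[0,s_1]$ piece is absorbed because $e^{\gamma t^2+\alpha t}(1+t)^a$ is bounded below there uniformly in $|\alpha|\le\alpha_0$; the $a+1$ is simply inherited from the $\alpha=0$ reference, and your method actually proves more. Likewise the boundary term sits at $s=t\ge s_1$ where the denominator is already bounded below, so the claim that the ``extra power of $(1+t)$ absorbs the possible vanishing of the denominator near $s=0$'' is a non-sequitur; the near-zero singularity is handled entirely by splitting off $[0,s_1]$.
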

\begin{proof} The results hold for $\ga=0$ \cite{w1}; so, employing the change of
variables $u=s+\frac{\ga}{2\gamma}$ we compute
\begin{align}
&\int_0^te^{\gamma s^2+\ga s}(1+s)^a\,ds=
\int_{\frac{\ga}{2\gamma}}^{t+\frac{\ga}{2\gamma}}e^{\gamma
u^2-\frac{\ga^2}{4\gamma^2}}
\left(1+|u-\frac{\ga}{2\gamma}|\right)^a\,du\notag\\
& \leq e^{-\frac{\ga^2}{4\gamma}}\left(1+|\frac{\ga}{2\gamma}|\right)^a
\int_{\frac{\ga}{2\gamma}}^{t+\frac{\ga}{2\gamma}}
 e^{\gamma u^2}\left(1+|u|\right)^a\,du\notag\\&
\leq e^{-\frac{\ga^2}{4\gamma}}\left(1+|\frac{\ga}{2\gamma}|\right)^a
[\int_{\frac{-|\ga_{_0}|}{2\gamma}}^0e^{u^2}(1+|u|)^a\,du +
%e^{-\frac{\ga^2}{4\gamma^2}}
\int_0^{t+\frac{|\ga|}{2\gamma}} e^{\gamma u^2}(1+u)^a\,du]
\notag\\
&\lesssim e^{-\frac{\ga^2}{4\gamma}}
\int_0^{t+\frac{|\ga|}{2\gamma}} e^{\gamma u^2}(1+u)^a\,du
\lesssim
e^{-\frac{\ga^2}{4\gamma}}e^{\gamma(t+\frac{\ga}{2\gamma})^2}
\left(1+t+\frac{|\ga|}{2\gamma}\right)^{a+1}\notag\\
&\lesssim e^{\gamma t^2 +\ga
t}\left(1+\frac{\ga_0}{2\gamma}\right)^{a+1}(1+t)^{a+1} \lesssim e^{\gamma
t^2 +\ga t}(1+t)^{a+1} \notag\end{align} which holds uniformly for
$t\geq 0 $ and for $|\ga|\leq \ga_0.$

By the same substitution we likewise compute for $t\geq
0,$
\begin{align}
&\int_t^{\infty}e^{-\gamma s^2+\ga s}(1+s)^a\,ds =
e^{\frac{\ga^2}{4\gamma}}\int_{t+\frac{\ga}{2\gamma}}^{\infty}e^{-\gamma
u^2}\left(1+|u-\frac{\ga}{2\gc}|\right)^a\,du
\notag\\
&\leq e^{\frac{\ga^2}{4\gamma}}\left(1+\frac{\ga_0}{2\gc}\right)^a
[\int_{-\frac{\ga_0}{2\gc}}^{\frac{\ga_0}{2\gc}}e^{\gc
u^2}(1+|u|)^a\,du+\int_{t+\frac{\ga}{2\gc}}^{\infty}
e^{\gc u^2}(1+|u|)^a\,du]\notag\\
&\lesssim
e^{\frac{\ga^2}{4\gamma}}e^{-\gamma(t-\frac{\ga}{2\gamma})^2}(1+t)^a
=e^{-\gamma t^2+\ga t}(1+t)^a\notag\end{align} uniformly for
$|\ga|\leq \ga_0$ and $t\geq 0;$ and, the result follows.
\end{proof}

\begin{remark}\label{rem1}
We make a correction to equation (2.13) \cite{w1}.\footnote
{The author thanks the anonymous reviewer of said article for having pointed out the error
in the original manuscript.}
For each $\gt>0$ the support of $\Breve{F}_{\gt}$ (the full Fourier transform of $F_{\gt}$)
contains a neighborhood of the origin and
thus the result should appear as
\begin{align}
||v_{\gt}||^2_{(-s)}&\leq c_s\gt^{-4}\int_{\xi_3\geq1} (1+(\xi_1\gt)^2+(\xi_2\gt)^2+(\xi_3\gt^2)^2)^{-s}|\Breve{F}_{\gt}(\vec{\xi})|^2d\vec{\xi}
\notag\\
&\leq c_s\gt^{-4-2s}\int_{\xi_3\geq 1}\xi_3^{-s}|\Breve{F}_{\gt}(\vec{\xi})|^2d\vec{\xi}
\leq c_s\gt^{-4-2s}\int_{\xi_3\geq 1}|\Breve{F}_{\gt}(\vec{\xi})|^2d\vec{\xi}\notag
\end{align}
for some constant $c_s>0.$  This follows by the Paley-Wiener Theorem and the $\cS(\BR)$-mode of convergence of
$\Breve{F}_{\gt}$
to its limit $\Breve{\Psi},$ supported in $\{\vec{\xi}|\xi_3\geq 1\}.$
\end{remark}

\begin{remark}\label{w3stuff}
The statements of Propositions 4.1, 4.4 and Lemma 4.5 \cite{w3} (here $z=\gm$)
each hold for operators $\cL_{\gm}$
of the present article under our definition of resolving permutations
but restricting the domain of $A$ to semi-infinite intervals $(R,\infty)$
(here $R=\gm_0$) for sufficiently large, fixed $R>0$.
\end{remark}
Let us elaborate on the above remark to point out modifications
necessary to apply the aforementioned results to the present work: We may interpret the
classifications $\cP_R,$ $\cF_R$ and $\cB_R$ in the manner of \cite{w3} (Appendix) regarding
only large $z>0$ (ignoring meromorphicity near the origin).
Furthermore, in the context of the present article,
we recast (19) \cite{w3} as follows:
If $\cL_{\gm}$ is regular, then there exists an admissible pair of bases for ker$\cL_{\gm}$
such that $\vec{\phi_l}^-=A(\gm) \vec{\phi_l}^+$ where
$A$ in an $n\times n$ matrix which likewise takes
the block form
\begin{equation}\label{decompIImtrx}
%\left(\begin{matrix}\phi_1^-(x,z)\\
%\vdots\\
%\phi_n^-(x,z)\end{matrix}\right) =
A= \left(
\begin{matrix}
0^{J^-\times J^-}
& M^{J^-\times J^+}&0
%^{(n-J^--J^+)\times(n-J^--J^+)}
\\
N^{J^+\times J^-}&0^{J^+\times J^+}
&0
\\
\cA&\cB& Q
\end{matrix}
\right)
.\end{equation}
Here, $Q=Q^{(n-J^+-J^-)\times(n-J^+-J^-)}$ and the dimensions of $\cA$ and $\cB$ are thus obvious.  Except for their dimensions,
the corresponding blocks have the same properties as those in Proposition 4.2 \cite{w3}.
Moreover, if $\cL_{\gm}$ is quasi-regular, there is a transition matrix $A(\gm)$
which takes such block forms on open sets $U_i$ (corresponding to various resolving $\gs_i$),
whereby equation (20) and Proposition 4.4 therein also hold.

Let us say that $A$ has a degenerate row when there is
an index $K > J^+$ whereby $[A(\gm_l)]_{j,k}:$ $k<K$ are each rapidly decreasing
on a common sequence $\gm_l$ $\rightarrow$ $\infty$ (srd). From the proofs of Corollaries
3.2 and 4.2 of \cite{w3} we find that if $\cL_{\gm}$ is irregular, in the present context,
then  $(A^{-1})^{\dagger}$ has a degenerate row and the result of Lemma 4.5 therein also follows here.
We note that the order of the admissible pairs according to $\pm$ sign is arbitrary.
Remark \ref{w3stuff} still holds upon interchange of bases,
redefining $A,$ $\gs$ and the $J^{\pm}$ accordingly.

{\it Proof of Proposition \ref{moremainests}:}
It follows from Section 4 of
\cite{c} that given the solution $v$ as in (\ref{w}), we may
find another solution by solving the equation
\begin{equation}
\tilde{v}^{\prime} = \tilde{\Lambda}\tilde{v} +
(\tilde{\cR}+\cC)\tilde{v}.
\label{red}
\end{equation}
Here $\tilde{\Lambda}$ and $\tilde{\cR}$ are obtained by
deleting the last row and last column from $\Lambda$
and $\cR$, respectively and where
$\cC$ is an $(n-1)$$\times$$(n-1)$ matrix with
components
$[\cC]_{i,j}
=\frac{v_i}{v_n}[B]_{n,j},
$
supposing (perhaps by increasing $t_0$ if necessary)
that $v_n(t)$ $\neq$ $0$ for $t\geq$ $t_0.$
Then, the vector
$$
w= g
\left(
\begin{matrix}
v_1\\
\vdots\\
v_{n-1}\\
v_n
\end{matrix}\right)
+
\left(
\begin{matrix}
\tilde{v}_1\\
\vdots\\
\tilde{v}_{n-1}\\
0
\end{matrix}
\right)
$$
yields a solution to $\cL_{\gm}y=0$
where
\begin{equation}
g^{\prime} = \frac{1}{v_n}(\sum_{j=1}^{n-1}[B]_{n,j}\tilde{v}_j)
.\label{gp}
\end{equation}

To obtain estimates for $w$, we have from analysis as in (\ref{v}) that
\begin{equation}\label{vtil1}
|\tilde{v}|\lesssim |e^{\Phi_{2}}|;\,\,
|\tilde{v}_{n-1}-e^{\Phi_2}|\lesssim |t^{-1}||e^{\Phi_{2}}|;\,\,
|\tilde{v}_j|\lesssim |t^{-1}||e^{\Phi_{2}}|
\end{equation}
for $j<n-1.$
Since the right-hand side of (\ref{gp}) is majorized by
$|{t^{-2}}||\exp(\Phi_{2}-\Phi_1)(t)|$
uniformly in $t$ and $\gm$,
we may uniquely define $g$ by
setting
$g(t_0)$ $=0$ to obtain
$$|g(t)|\lesssim |t^{-1}||\exp(\Phi_{2}-\Phi_1)(t)|.$$

It then follows that $w$ and its components also satisfy estimates
(\ref{vtil1}) for $j\leq n.$  We then obtain
another solution $\psi_{n-1}=[Sw]_{1,1}$, with $\gamma_2$
replacing $\gamma_1$ in $S$, this one satisfying
$$\dertn{j}\psi_{2}=[Sw]_{1,j}=(\gamma_{2}t+\gb_{\gm,n-1})^{j-1}
e^{\Phi_{2}(t,\gm)}(1+o(1))\,\,\text{\rm as}\,\,t\rightarrow
\infty$$ with bounds uniform for $\gm>\gm_0$. Moreover, from the
construction of $g$, we have that $\psi_{2}(t,\gm)$
is a holomorphic function of $\gm$ provided $t$ is
sufficiently large.  It is clear that $\psi_{2}(t,\gm)$ extends
to a holomorphic function of $\gm$ for all $t.$ We then
obtain estimates for linearly independent functions $\psi_j:$
$1\leq j\leq n$ by induction each being holomorphic functions of $\gm$.
Estimates and holomorphic properties for large $t<0$ follow
similarly: By the same analysis on $P(i\partial_s,s)$ with $s=-t$,
the $\gamma_j$'s remain the same; yet, by (\ref{D1}) and (\ref{beta}), the $\beta_j$'s change sign.
The main result, having been shown up to derivatives of order $j=n-1$,
follows inductively by applying (\ref{mainest}) to
$\dertn{l+n}\psi$ $=\dertn{l}\circ\cL_{\gm}\psi$ for any $\psi$ $\in$
ker$\cL_{\gm}$ (cf. \cite{w1}); here, $$\dertn{l}\circ\cL_{\gm}=
\sum_{j=0}^{n+l}a_{j}(t,\gm)\dertn{j}$$ whose coefficients satisfy
$a_j(t,\gm)\lesssim$ $(1+|t|)^{n+l-j}$.
\qed

\end{document}